\documentclass[a4paper,reqno]{amsart}

\parindent=15pt
\parskip=3pt
\setlength{\textwidth}{7in}
\setlength{\oddsidemargin}{-24pt}
\setlength{\evensidemargin}{-24pt}
\setlength{\textheight}{9.2in}
\setlength{\topmargin}{-5pt}

\usepackage{graphicx}
\usepackage{mathrsfs}
\usepackage{amsfonts}
\usepackage{amssymb}
\usepackage{amsmath}
\usepackage{amsthm}
\usepackage{amscd}
\usepackage[all,2cell]{xy}
\usepackage{color}
\usepackage[pagebackref,colorlinks]{hyperref}
\usepackage{enumerate}

\usepackage{comment}
\usepackage{hyperref}
\UseAllTwocells \SilentMatrices

\numberwithin{equation}{section}

\usepackage{comment}

\theoremstyle{plain}
\newtheorem{thm}{Theorem}[section]
\newtheorem{prop}[thm]{Proposition}
\newtheorem{cor}[thm]{Corollary}
\newtheorem{lemma}[thm]{Lemma}

\theoremstyle{definition}
\newtheorem{deff}[thm]{Definition}
\newtheorem{example}[thm]{Example}
\theoremstyle{remark}
\newtheorem{rmk}[thm]{\bf Remark}

\def\g{\gamma}
\def\d{\delta}
\def\G{\Gamma}
\def\mg{\mathcal{G}}
\def\xra{\xrightarrow[]{}}
\def\a{\alpha}
\def\b{\beta}

\def\v{\varepsilon}

\def\O{\mathcal{O}}
\def\I{\mathcal{I}}

\def\SS{\mathcal{S}}

\def\sub{\subseteq}

\def\F{\mathbb{F}}

\def\star{{\rm Star}}

\def \Z{\mathbb Z}
\def\-{\text{-}}

\def\LL{\mathcal{L}}

\newcommand{\supp}{\operatorname{supp}}
\newcommand{\Iso}{\operatorname{Iso}}

\newcommand{\Ker}{\operatorname{Ker}}

\newcommand{\gr}{\operatorname{gr}}

\newcommand{\id}{\operatorname{id}}

\newcommand{\note}[1]{\textcolor{blue}{$\Longrightarrow$ #1}}

\begin{document}

\title{Graded Steinberg algebras and partial actions}

\author{Roozbeh Hazrat}
\address{
Western Sydney University\\
Australia} \email{r.hazrat@westernsydney.edu.au, h.li@westernsydney.edu.au}

\author{Huanhuan Li}

\subjclass[2010]{22A22, 18B40,16D25}

\keywords{Partial action, partial skew inverse semigroup ring, partial skew group ring, Steinberg algebra, Leavitt path algebra}

\date{\today}

\begin{abstract} Given a graded ample Hausdorff groupoid, we realise its graded Steinberg algebra as a partial skew inverse semigroup ring.  We use this to show that for a partial action of a discrete group on a locally compact Hausdorff topological space,  the Steinberg algebra of the associated groupoid is graded isomorphic to the corresponding partial skew group ring. We show that there is a one-to-one correspondence between the open invariant subsets of the topological space and the graded ideals of the partial skew group ring. We also consider the algebraic version of the partial $C^*$-algebra of an abelian group and realise it as a partial skew group ring via a partial action of the group on a topological space. Applications to the theory of Leavitt path algebras are given. 
\end{abstract}

\maketitle

\begin{center}
\emph{In memory of Maryam Mirzakhani (1977-2017)}
\end{center}

\section{Introduction}

The notion of crossed product by a partial action has its origin in the concept of crossed product by a partial automorphism introduced by Exel in \cite{exel1994}. Crossed products of $C^*$-algebras by partial actions of discrete groups were defined in \cite{maclanahan} by McClanahan. Skew group rings were introduced by Dokuchaev and Exel in \cite{de} as algebraic analogues of $C^*$-crossed products by partial actions. The latter algebras are a powerful tool in the study of operator algebras (see \cite{exel1994, exel1997, exel1998, quiggraeburn}), and so it is important to realise $C^*$-algebras as partial crossed products (see \cite{be,el} for example), as one can then benefit from the established theory about partial crossed products.

Sieben \cite{sieben} introduced the notion of a crossed product by an action of an inverse semigroup on a $C^*$-algebra using covariant representations. Later, a definition of crossed product for actions of inverse semigroups on $C^*$-algebras, without resorting to covariant representations was presented in \cite{exelvieira}. The algebraic version for actions of inverse semigroups on algebras were investigated \cite{bussexel}.

Recently Steinberg algebras were introduced  in~\cite{cfst,st} as an algebraisation of the groupoid
$C^*$-algebras first studied by Renault~\cite{re}. Steinberg algebras include
Leavitt and Kumjian--Pask algebras as well as inverse semigroup algebras (see \cite{cp,cs,st}). These classes of algebras have been attracting significant attention, with particular interest in the graded ideal structures of these algebras. 

In this note we relate these two class of algebras. Starting from a graded ample Hausdorff groupoid $\mg$ and an open invariant subset $U\subseteq \mg^{(0)}$, we establish a graded isomorphism  
\begin{equation}\label{tttt11}
A_R(\mg_U)\cong_{\gr}C_R(U)\rtimes\mg^{(h)}.
\end{equation}
Here $\mg^{(h)}$ is the inverse semigroup of graded compact open bisections of $\mg$ which acts partially on $U$, $C_R(U)\rtimes\mg^{(h)}$ is the  corresponding partial skew inverse semigroup ring and $A_R(\mg_U)$ is the Steinberg algebra associated to the groupoid $\mg_U=r^{-1}(U)$.  In particular, we have a graded isomorphism $A_R(\mg)\cong_{\gr}C_R(\mg^{(0)})\rtimes_{\pi}\mg^{(h)}$ (see Theorem \ref{thmpsisr}).




Let $\phi=(\phi_g, X_g, X)_{g\in G}$ be a partial action of a group $G$ on $X$ such that each $X_g$ is clopen subset of $X$. Then we have an induced partial action of $G$ on $C_R(X)$. Denote by $\mg_X=\bigcup_{g\in G}g\times X_g$ the $G$-graded groupoid given in \eqref{groupoid} associated to $\phi$. As a direct consequence of Theorem \ref{thmpsisr}, we realise the $G$-graded Steinberg algebra $A_R(\mg_X)$ as the partial skew group ring $C_R(X)\rtimes_{\phi}G$ (Proposition \ref{propss}). 
Specialising to the setting of the Leavitt path algebra of a directed graph $E$, we recover Gon\c{c}alves and Royer's result \cite[Theorem 3.3]{goncalvesroyer}, showing that $L_R(E)$ is graded isomorphic to the partial skew group ring via an isomorphism of groupoids (Corollary \ref{corlpa}). 
For the case $G=\Z$, we give a condition on the directed graph $E$ so that the Leavitt path algebra of $E$ can be realised as the partial skew group ring of $\Z$ on $C_R(X)$.

Recall that when $\mg$ is an ample, Hausdorff groupoid with a continuous cocycle $c : \mg\xra G$ and $c^{-1}(\varepsilon)$ is strongly effective, there is a one-to-one correspondence between the open invariant subsets of $\mg^{(0)}$ and the graded ideals of the Steinberg algebra $A_K (\mg)$ of $\mg$ (see \cite[Theorem 5.3]{cep}). We observe that the associated groupoid $\mg_X$ is an ample, Hausdorff groupoid if $X$ is locally compact Hausdorff with a basis of compact open sets. Applying Proposition \ref{propss} there is a one-to-one correspondence between the open invariant subsets of $\mg_X^{(0)}$ and the graded ideals of the partial skew group ring $C_R(X)\rtimes_{\phi}G$.

Exel constructed the partial group $C^*$-algebra \cite[Definition 6.4]{exel1998} of a group $G$.  We consider the algebraic version of the construction and realise it as a partial skew group ring associated to a partial action of an abelian group $G$ on an appropriate set.

The paper is organised as follows. In Section \ref{section2}, we realise the Steinberg algebra of $\mg$ as a partial skew inverse semigroup ring. In Section \ref{section3}, we consider the partial action of $G$ on $X$. In subsection \ref{subsection31}, we prove that the $G$-graded Steinberg algebra $A_R(\mg_X)$ is graded isomorphic to the partial skew group ring $C_R(X)\rtimes_{\phi}G$. In subsection \ref{subsection32}, we prove the one-to-one correspondence between the open invariant subsets of $\mg_X^{(0)}$ and the graded ideals of the partial skew group ring. In subsection \ref{subsection33}, we give the example of a partial skew group ring arising from an abelian group $G$. In Section \ref{section4}, we realises Leavitt path algebras as graded partial skew group ring, where the grading is over a free group. We also describe a graph condition under which the associated Leavitt path algebra can be realised as a $\Z$-graded partial skew group ring.

\emph{Historical notes.} While preparing this paper, Beuter and Gon\c{c}alves posted \cite{beutergoncalves} on arXiv which contains two main theorems: Theorem 3.2 in \cite{beutergoncalves} proves that $C_R(X)\rtimes_{\phi} G$ is isomorphic to $A_R(\mg_X)$ as $R$-algebras and Theorem 5.2 realises Steinberg algebra of an ample Hausdorff groupoid as a partial skew inverse semigroup ring. Our Theorem \ref{thmpsisr} improve their Theorem 5.2 by showing that there is a graded isomorphism on the level of ideals and that Theorem 3.2 in \cite{beutergoncalves}  is direct consequence of Theorem \ref{thmpsisr}.

\section{Steinberg algebras and partial skew inverse semigroup rings}\label{section2}

In this section, we consider a $G$-graded ample Hausdorff groupoid $\mg$, where $G$ is a discrete group and its associated Steinberg algebra $A_R(\mg)$.  The main result of this section (Theorem~\ref{thmpsisr}) is to realise the $G$-graded Steinberg algebra  $A_R(\mg)$ as a partial skew inverse semigroup ring. We briefly recall the concepts of Steinberg algebras and partial skew semigroup rings. 

\subsection{Steinberg algebras} Steinberg algebras were introduced in~\cite{st} in the context of discrete inverse
semigroup algebras and independently in \cite{cfst} as a model for Leavitt path algebras.

A groupoid $\mg $ is a small category in which every morphism is invertible. It can also be
viewed as a generalisation of a group which has a partial binary operation (and several units).   If $x\in\mg$, $d(x)=x^{-1}x$ is the \emph{domain} of $x$ and $r(x)=xx^{-1}$ is
its \emph{range}. The pair $(x,y)$ is composable if and only if $r(y)=d(x)$. The set
$\mg^{(0)}:=d(\mg)=r(\mg)$ is called the \emph{unit space} of $\mg$. Elements of
$\mg^{(0)}$ are units in the sense that $xd(x)=x$ and $r(x)x=x$ for all $x \in \mg$. A subset $U$ of the
unit space $\mg^{(0)}$ of a groupoid $\mg$ is called \emph{invariant} if $d(\g)\in U$ implies $r(\g)\in U$;
equivalently,
\[
    r(d^{-1}(U))=U=d(r^{-1}(U)).
\] For
$U,V\sub\mg$, we define
\[
    U^{-1}=\{\g^{-1}\;|\; \g\in U\};~~~~~~~ ~~~~~~~UV=\big \{\a\b \mid \a\in U,\b\in V \text{ and } r(\b)=d(\a)\big\}.
\]

A topological groupoid is a groupoid endowed with a topology under which the inverse map
is continuous, and such that composition is continuous with respect to the relative
topology on $\mg^{(2)} := \{(\b,\g) \in \mg \times \mg : d(\b) = r(\g)\}$ inherited from
$\mg\times \mg$. An \emph{\'etale} groupoid is a topological groupoid $\mg$ such that the
domain map $d$ is a local homeomorphism. In this case, the range map $r$ is also a local
homeomorphism. An \emph{open bisection} of $\mg$ is an open subset $U\subseteq \mg$ such
that $d|_{U}$ and $r|_{U}$ are homeomorphisms onto an open subset of $\mg^{(0)}$. We say
that an \'etale groupoid $\mg$ is \emph{ample} if there is a basis consisting of compact
open bisections for its topology.

Let $\mg$ be an ample Hausdorff topological groupoid. Suppose that $R$ is a commutative ring with identity. Consider $A_R(\mg) = C_c(\mg, R)$, the space of
compactly supported continuous functions from $\mg$ to $R$ with $R$ given the discrete topology.
Then $A_R(\mg)$ is an $R$-algebra with addition defined point-wise and
multiplication is given by convolution
$$(f*g)(\g) = \sum_{\{\a\b=\g\}}f(\a)g(\b).$$
It is useful to note that $$1_{U}*1_{V}=1_{UV}$$ for compact open bisections $U$ and $V$
(see \cite[Proposition 4.5(3)]{st}). With this structure, $A_R(\mg)$ is an algebra called the \emph{Steinberg algebra}  associated to $\mg$. The algebra $A_R(\mg)$ can also be realised as the span of characteristic functions of the form $1_U$ where $U$ is a compact open bisection (see \cite[Lemma 3.3]{cfst}).

Let $G$ be a group with identity $\varepsilon$. A ring $A$ (possibly without unit)
is called a \emph{$G$-graded ring} if $ A=\bigoplus_{g \in G} A_{g}$
such that each $A_{g}$ is an additive subgroup of $A$ and $A_{g}  A_{h}
\subseteq A_{gh}$ for all $g, h \in G$. The group $A_g$ is
called the $g$-\emph{homogeneous component} of $A.$ When it is clear from context
that a ring $A$ is graded by a group $G,$ we simply say that $A$ is a  \emph{graded
ring}. If $A$ is an algebra over a ring $R$, then $A$ is called a \emph{graded algebra}
if $A$ is a graded ring and $A_{g}$ is a $R$-submodule for any $g \in G$.

The elements of $\bigcup_{g \in G} A_{g}$ in a graded ring $A$ are called
\emph{homogeneous elements} of $A.$ The nonzero elements of $A_g$ are called
\emph{homogeneous of degree $g$} and we write $\deg(a) = g$ for $a \in
A_{g}\backslash \{0\}.$ We say that a $G$-graded ring $A$ is
\emph{trivially graded} if $A_{\varepsilon}$ is the only nonzero component of $A$, that is, $A_\varepsilon=A$, so $A_g=0$ for $g \in G
\backslash \{\varepsilon\}$. Any ring admits a trivial grading by any group.

Let $G$ be a discrete group and $\mg$ a topological groupoid. A $G$-grading of $\mg$ is
a continuous function $c : \mg \to G$ such that $c(\a)c(\b) = c(\a\b)$ for all $(\a,\b)
\in \mg^{(2)}$; such a function $c$ is called a continuous $1$-cocycle on $\mg$. In this case, we call $\mg$ a $G$-graded ample groupoid and we write $\mg=\bigsqcup_{g\in G}\mg_g$, where $\mg_g=c^{-1}(g)$. Note that $\mg_g \mg_h \subseteq \mg_{gh}$. 
We say a subset $X\subseteq \mg$ is $g$-graded if $X\subseteq \mg_g$ for $g\in G$. One can see that $\mg^{(0)} \subseteq \mg_\varepsilon$, i.e.,  the unit space  is $\varepsilon$-graded, where $\varepsilon$ is the identity of the group $G$.

Recall from \cite[Lemma 3.1]{cs} that if $\mg=\bigsqcup_{g\in G}\mg_g$ is a $G$-graded groupoid, then the Steinberg algebra $A_R(\mg)$ is a $G$-graded
algebra with homogeneous components
\begin{equation}\label{hgboat}
    A_{R}(\mg)_{g} = \{f\in A_{R}(\mg)\mid \supp(f)\subseteq \mg_g\}.
\end{equation}
The family of all idempotent elements of $A_{R}(\mg^{(0)})$ is a set of local units for
$A_{R}(\mg)$ (\cite[Lemma 2.6]{cep}). Here, $A_{R}(\mg^{(0)})\subseteq A_{R}(\mg)$ is a
subalgebra. Note that any ample Hausdorff
groupoid admits the trivial cocycle from $\mg$ to the trivial group $\{\varepsilon\}$,
which gives rise to a trivial grading on $A_{R}(\mg)$.

\subsection{Partial skew inverse semigroup rings}\label{subsection22}
An inverse semigroup is a semigroup $\SS$ such that for each $s\in\SS$, there exists a unique $s^*\in\SS$ such that $ss^*s=s$ and $s^*ss^*=s^*$. We call $s^*$ the inverse of $s$. Note that every group is an inverse semigroup. Recall that there is a natural partial order relation defined on an inverse semigroup $\SS$ by $$s \leq t \Longleftrightarrow s=ts^*s\Longleftrightarrow s=ss^*t$$ for $s,t\in\SS$.

\begin{deff}\label{defpar} \cite[Proposition 3.5]{bussexel} Let $\SS$ be an inverse semigroup. A \emph{partial action} of $\mathcal{S}$ on a set $X$ is $\pi=(\pi_s, X_s, X)_{s\in \SS}$ with $X_s\sub X$ a subset and $\pi_s: X_{s^*}\xra X_s$ a bijection such that for all $s, t \in\SS$

\begin{itemize}
\item[(i)] $\pi^{-1}_s=\pi_{s^*}$;

\item[(ii)] $\pi_s(X_{s^*}\cap X_t)\sub X_{st}$;

\item[(iii)] if $s\leq t$, then $X_s\sub X_t$;

\item[(iv)] For every $x\in X_{t^*}\cap X_{t^*s^*}$, $\pi_s(\pi_t(x))=\pi_{st}(x)$.
\end{itemize}
\end{deff}

If $\SS$ has a zero element $0$, we assume that $X_0=\emptyset$. In case that $X$ is an algebra or a ring then the subsets $X_s$  should also be ideals and the maps $\pi_s$ should be isomorphisms of algebras. In the topological setting, each $X_s$ should be an open set and each $\pi_s$ a homeomorphism of topological spaces. Furthermore, if the inverse semigroup $\SS$ has a unit $\varepsilon$, we assume that $X_{\varepsilon} = X$ and $\pi_{\varepsilon}$ is the identity map of $X$.

Let $\pi=(\pi_s, A_s, A)_{s\in\SS}$ be an action of the inverse semigroup $\SS$ on an algebra $A$. Define $\LL$ as the set of all formal forms $\sum_{s\in \SS}a_s\d_s$ (with finitely many $a_s$ nonzero), where $a_s\in A_s$ and $\d_s$ are symbols, with addition defined in the obvious way and multiplication being the linear extension of $$(a_s\d_s) (a_t\d_t)=\pi_s\big(\pi_{s^{-1}}(a_s)a_t\big)\d_{st}.$$ Then $\LL$ is an algebra which is possibly not associative. Exel and Vieira proved under which condition $\LL$ is associative (see \cite[Theorem 3.4]{exelvieira}). Combing \cite[Theorem 3.4] {exelvieira} with \cite[Proposition 2.5]{de}, if each ideal $A_s$ is idempotent or non-degenerate, then $\LL$ is associative.

\begin{deff} \label{defpartialinverse}Let $\pi=(\pi_s, A_s, A)_{s\in\SS}$ be an action of an  inverse semigroup $\SS$ on an algebra $A$. Consider $\mathcal{N}=\langle a\d_s-a\d_t: a\in A_s, s\leq t\rangle$, which is the ideal generated by $a\d_s-a\d_t$. The \emph{partial skew inverse semigroup ring} $A\rtimes_{\pi}\SS$ is defined as the quotient ring $\LL/ \mathcal{N}$. 
\end{deff}

Next we equip these algebras with a graded structure. Suppose that $G$ is a group and $w: \SS\setminus\{0\}\xra G$ is a function such that \begin{equation}
\label{function}
w(st)=w(s)w(t)
\end{equation} for $s, t\in\SS$ with $st\neq 0$. Here, $0$ is  a zero element of $\SS$. If $\SS$ does not have a zero element then the function $w$ is a homomorphism from $\SS$ to $G$. For each non-zero element $s\in\SS$ since $s=ss^*s$ we have $w(s^*)=w(s)^{-1}$. 

Observe now that the algebra $\LL$ is a $G$-graded algebra with elements $a_s\d_s\in\LL$ with $a_s\in A_s$ are homogeneous elements of degree $w(s)$. Furthermore, if $s\leq t$, then $s=ts^*s$. It follows that $w(s)=w(t)w(s^*)w(s)=w(t)$. Hence $a\d_s-a\d_t$ with $s\leq t$ and $a\in A_s$ is a homogeneous element in $\LL$. Thus the ideal $\mathcal N$ generated by homogeneous elements is a graded ideal and therefore the quotient algebra $A\rtimes_{\pi}\SS= \LL/\mathcal N$ is $G$-graded.

Let $X$ be a Hausdorff topological space and $R$ a unital commutative ring with a discrete topology. Let $C_R(X)$ be the set of $R$-valued continuous function (i.e., locally constant) with compact support.  If $D$ is a compact open subset of $X$, the characteristic function of $D$, denoted by $1_D$, is clearly an element of $C_R(X)$. In fact, every $f$ in $C_R(X)$ may be written as 
 \begin{equation}\label{rep}
 f=\sum_{i=1}^nr_i1_{D_i}, 
 \end{equation} where $r_i\in R$ and the $D_i$ are compact open, pairwise disjoint subsets of $X$. $C_R(X)$ is a commutative $R$-algebra with pointwise multiplication. The support of $f$, defined
by $\supp(f)=\{x\in X\;|\; f(x)\neq 0\}$, is clearly a compact open subset.

We observe that $C_R(X)$ is an idempotent ring. We have \begin{equation}\label{equationforlocal}
 \sum_{i=1}^n1_{D_i} \cdot f=f \cdot \sum_{i=1}^n1_{D_i}=\sum_{i=1}^nr_i1_{D_i}=f
 \end{equation} for any $f\in C_R(X)$ which is written as \eqref{rep}. So $C_R(X)$ is a ring with local units and thus an idempotent ring. 

Let $\mg$ be a $G$-graded ample Hausdorff groupoid.  Set \begin{equation}
\mg^{(h)}=\{B\; |\; B \text{~is a graded compact open bisection of~} \mg\}.
\end{equation} Then $\mg^{(h)}$ is an inverse semigroup such that $B^*=B^{-1}$ for each $B\in\mg^{(h)}$ (see \cite[Proposition 2.2.4]{paterson}).  Furthermore, the map $\mg^{(h)} \backslash \emptyset \rightarrow G, U\mapsto g$, if $U\subseteq \mg_g$, makes $\mg^{(h)}$ a graded inverse semigroup. 
 Observe that in  the inverse semigroup $\mg^{(h)}$, $B\leq C$ if and only if $B\sub C$ for $B, C\in \mg^{(h)}$. 

Let $U\sub \mg^{(0)}$ be an open invariant subset. For each $B\in \mg^{(h)}$, define $$U_B=r(B)\cap U, \;\;\; U_{B^{-1}}=d(B)\cap U$$ and $\pi_B: U_{B^{-1}}\xra U_B$ is given by $
\pi_B(u)=r(b)$ if $u=d(b)$ for some $b\in B$. The map $\pi_B$ is well defined, since $B$ is a bisection of $\mg$. Observe that $\pi_B$ is a bijection with inverse $\pi_{B^{-1}}$. We claim that $\pi_B$ is a homeomorphism for each $B\in \mg^{(h)}$. Take any open subset $O\sub U_B$. Observe that $\pi_B^{-1}(O)=d(r^{-1}(O)\cap B)$ is an open subset of $U_{B^{-1}}$. Thus $\pi_B$ is continuous. Similarly, $\pi_B^{-1}$ is continuous.

Now we show that $\pi=(\pi_B, U_B, U)_{B\in\mg^{(h)}}$ is a partial action of $\mg^{(h)}$ on $U$. We check conditions (i)--(iv) in Definition \ref{defpar}. For (i), we have $\pi_{B^{-1}}=\pi_B^{-1}$ by the definition of $\pi_B$ and $\pi_{B^{-1}}$. For (ii), observe that $$\pi_B(U_{B^{-1}}\cap U_C)=\pi_B(U\cap d(B)\cap r(C))\sub d(BC)\cap U=U_{BC}$$ for any $B, C\in \mg^{(h)}$. For (iii), $U_B\sub U_C$ if $B\sub C$. For (iv), observe that for $u\in U_{C^{-1}}\cap U_{C^{-1}B^{-1}}$, we have $u=d(c)=d(bc)$ for some $c\in C$, $b\in B$ with $r(c)=d(b)$. It follows that $$\pi_B(\pi_C(u))=\pi_B(r(c))=\pi_B(d(b))=r(b)=r(bc)=\pi_{BC}(u),$$ and thus (iv) holds.

There is an induced partial action $(\pi_B, C_R(U_B), C_R(U))_{B\in \mg^{(h)}}$ of $\mg^{(h)}$ on an algebra $C_R(U)$. Here the map $\pi_B: C_R(U_{B^{-1}})\xra C_R(U_B)$ is given by $\pi_B(f)=f\circ \pi_B^{-1}$. We still denote the induced partial action by $\pi$. In this case, $\LL=\{\sum_{B\in \mg^{(h)}}a_B\d_B\;|\; a_B\in C_R(U_B)\}$ is associative, since each ideal $C_R(U_B)$ is idempotent. Since $\mg^{(h)}$ is $G$-graded, $C_R(U)\rtimes_{\pi} \mg^{(h)}$ is a $G$-graded algebra. 

Let $B$ be an $R$-algebra. A \emph{representation} \cite{clarkmichell,cfst} of $\mg^{(h)}$ in $B$ is a family
$\{t_D :D\in \mg^{(h)}\}\sub B$ satisfying
\begin{itemize}
\item[(R1)] $t_{\emptyset}= 0$;

\item[(R2)] $t_Dt_C=t_{DC}$  for all $D,C\in \mg^{(h)}$; and

\item[(R3)] $t_D + t_C=t_{D\cup C}$, whenever $D$ and $C$ are disjoint elements of $\mg^{(h)}\cap \mg_\g$ for some $\g\in  G$ such that $D\cup C$ is a bisection.
\end{itemize}

For each open invariant subset $U\sub \mg^{(0)}$, we denote $d^{-1}(U)$ by $\mg_U$ which is an open subgroupoid of $\mg$. The following is the main result of this section.

\begin{thm} \label{thmpsisr} Let $\mg$ be a $G$-graded ample Hausdorff groupoid, $U\sub \mg^{(0)}$ an open invariant subset and $\pi=(\pi_B, C_R(U_B), C_R(U))_{B\in\mg^{(h)}}$ the induced partial action of $\mg^{(h)}$ on $C_R(U)$. Then there is a $G$-graded isomorphism of $R$-algebras
\begin{equation}
A_R(\mg_U)\cong_{\gr}C_R(U)\rtimes_{\pi}\mg^{(h)}.
\end{equation} In particular, we have $A_R(\mg)\cong_{\gr}C_R(\mg^{(0)})\rtimes_{\pi}\mg^{(h)}$. 
\end{thm}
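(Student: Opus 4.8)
The plan is to exhibit the isomorphism as an explicit pair of mutually inverse graded $R$-algebra homomorphisms, building the forward map by hand and the backward map through the universal property of the Steinberg algebra.

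First I would define an $R$-linear map $\Phi\colon \LL \to A_R(\mg_U)$ on the free $R$-module $\LL=\bigoplus_{B\in\mg^{(h)}}C_R(U_B)\d_B$ by sending a generator $a\d_B$ (with $a\in C_R(U_B)$, $U_B=r(B)\cap U$) to the function $\Phi(a\d_B)=a\circ r|_B$ on $\mg_U$, extended by zero off $B$; concretely, if $a=\sum_i r_i 1_{D_i}$ with $D_i\sub U_B$ compact open, then $\Phi(a\d_B)=\sum_i r_i 1_{(r|_B)^{-1}(D_i)}$. Because $r|_B$ is a homeomorphism and $U$ is invariant (so $r(\g)\in U$ forces $d(\g)\in U$), each $(r|_B)^{-1}(D_i)$ is a graded compact open bisection of $\mg_U$ and $\Phi(a\d_B)$ is a well-defined homogeneous element of degree $w(B)$; thus $\Phi$ respects the $G$-grading.

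The crux of the argument is to check that $\Phi$ is multiplicative, i.e.\ that convolution in $A_R(\mg_U)$ matches the twisted product $(a\d_B)(c\d_C)=\pi_B(\pi_{B^{-1}}(a)\,c)\,\d_{BC}$ of $\LL$. I would fix $\g\in BC$ and use that $B$ and $C$ are bisections to write $\g=\a\b$ uniquely with $\a\in B$, $\b\in C$; then $(\Phi(a\d_B)*\Phi(c\d_C))(\g)=a(r(\a))\,c(r(\b))$, while unwinding the partial action (using $\pi_B^{-1}(r(\a))=d(\a)=r(\b)$) gives $\Phi\big((a\d_B)(c\d_C)\big)(\g)=a(r(\g))\,c(d(\a))=a(r(\a))\,c(r(\b))$, so the two agree. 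This bisection-bookkeeping, together with the verification that empty/degenerate compositions and supports are handled correctly, is the main obstacle. Next, for $B\le C$ (i.e.\ $B\sub C$) and $a\in C_R(U_B)$, the functions $a\circ r|_B$ and $a\circ r|_C$ coincide because $\supp(a)\sub r(B)$, so $\Phi$ kills each generator $a\d_B-a\d_C$ of $\NN$; being a homomorphism, $\Phi$ annihilates $\NN$ and descends to a graded homomorphism $\overline\Phi\colon C_R(U)\rtimes_\pi\mg^{(h)}\to A_R(\mg_U)$.

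For the inverse I would invoke the universal property of the Steinberg algebra: $A_R(\mg_U)$ is generated, subject to the relations (R1)--(R3), by the characteristic functions of graded compact open bisections of $\mg_U$, and this family is universal among representations of $\mg^{(h)}$ (restricted to bisections inside $\mg_U$) in an $R$-algebra. Setting $t_D=\overline{1_{r(D)}\d_D}$ for each such $D$, I would verify (R1)--(R3) in $C_R(U)\rtimes_\pi\mg^{(h)}$: relation (R2) $t_Dt_C=t_{DC}$ follows from $\pi_D(\pi_{D^{-1}}(1_{r(D)})1_{r(C)})=1_{r(DC)}$, while (R3) follows from the relations $\overline{1_{r(D)}\d_D}=\overline{1_{r(D)}\d_{D\cup C}}$ in the quotient (valid since $D\le D\cup C$) together with $r(D)\sqcup r(C)=r(D\cup C)$. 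This yields a graded homomorphism $\Psi\colon A_R(\mg_U)\to C_R(U)\rtimes_\pi\mg^{(h)}$ with $\Psi(1_D)=t_D$. Finally I would check the two composites on generators: $\overline\Phi(t_D)=1_D$ gives $\overline\Phi\circ\Psi=\id$, and, writing $B_i:=(r|_B)^{-1}(D_i)\le B$, one gets $\Psi\big(\overline\Phi(\overline{a\d_B})\big)=\sum_i r_i\,\overline{1_{D_i}\d_{B_i}}=\sum_i r_i\,\overline{1_{D_i}\d_B}=\overline{a\d_B}$, so $\Psi\circ\overline\Phi=\id$; hence $\overline\Phi$ is the desired graded isomorphism. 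The final assertion is the special case $U=\mg^{(0)}$, for which $\mg_U=\mg$.
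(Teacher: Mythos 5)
Your proof is correct, and it is assembled from the same two ingredients as the paper's: the representation $D\mapsto 1_{r(D)}\d_D$ of $\mg_U^{(h)}$, which via the universal property of $A_R(\mg_U)$ gives your $\Psi$ (the paper's map $f$), and the ``evaluate along the range map'' assignment $a\d_B\mapsto a\circ r|_B$, which gives your $\overline\Phi$ (the paper's map $g$). The difference is in the architecture, and it is not vacuous. The paper takes $f$ as the primary map, proves surjectivity directly via $1_D\d_B=1_{r(E)}\d_B=1_{r(E)}\d_E=f(1_E)$ with $E=r^{-1}(D)\cap B\leq B$, and uses $g$ only as a linear left inverse ($gf=\id$) to conclude injectivity; it checks that $g$ kills the generators $a\d_B-a\d_C$ of $\NN$, but never that $g$ is multiplicative, so the descent of $g$ to the quotient $\LL/\NN$ --- $\NN$ being a two-sided ideal, not merely the $R$-span of those generators --- is left implicit. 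You instead prove that $\Phi$ is an algebra homomorphism on $\LL$ (via the unique factorization $\g=\a\b$ of elements of $BC$, which is exactly where the bisection hypothesis enters), after which vanishing on the generators genuinely forces vanishing on all of $\NN$, and you then verify both composites are the identity. Your route costs one extra computation (multiplicativity of $\Phi$) but buys an airtight quotient argument and a two-sided inverse; the paper's route is shorter but tacitly relies on the reader to supply precisely the multiplicativity you establish.
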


\begin{proof} We define a representation $\{t_D\;|\; D \in\mg_U^{(h)}\}$ in the algebra $C_R(U)\rtimes_{\pi}\mg^{(h)}$. For each $D\in \mg_U^{(h)}$, define $t_D=1_{r(D)}\d_D$. For (R1), $t_{\emptyset}=0$. For (R2), we have \begin{equation*}
\begin{split}
t_Dt_C&=1_{r(D)}\d_D1_{r(C)}\d_C=\pi_D\big(\pi_D^{-1}(1_{r(D)})1_{r(C)}\big)\d_{DC}=\pi_D(1_{d(D)}1_{r(C)})\d_{DC}\\
&=\pi_D(1_{d(D)\cap r(C)})\d_{DC}=1_{r(DC)}\d_{DC}=t_{DC},
\end{split}
\end{equation*} for any $D, C\in\mg^{(h)}_U$. For (R3), suppose that $D\sub \mg_{\g}$ and $C\sub \mg_{\g}$ are disjoint elements of $\mg^{(h)}_U$ for some $\g\in G$ such that $D\cup C$ is a bisection of $\mg$. It follows that \begin{equation*}
\begin{split}
t_D+t_C&=1_{r(D)}\d_D+1_{r(C)}\d_C
=1_{r(D)}\d_{D\cup C}+1_{r(C)}\d_{D\cup C}
=(1_{r(D)}+1_{r(C)})\d_{D\cup C}
=t_{D\cup C}, 
\end{split}
\end{equation*} where the equations hold because $D\cup C$ is a bisection and thus $r(D\cup C)=r(D)\cup r(C)$. 

By the universality of Steinberg algebras (refer to \cite[Theorem 3.10]{cfst} and \cite[Proposition 2.3]{clarkmichell}),  we have an $R$-homomorphism $$f: A_R(\mg_U)\longrightarrow C_R(U)\rtimes_{\pi}\mg^{(h)}$$ such that $f(1_D)=t_D$ for each $D\in \mg^{(h)}_U$. It is evident that $f$ preserves the grading. Hence, $f$ is a homomorphism of $G$-graded algebras.

To prove the surjectivity, fix any $B\in \mg^{(h)}$. Suppose that $D\sub U_B$ is a compact open subset. Recall that $U_B=r(B)\cap U$. We first show that $1_D\d_B\in {\rm Im} f$. Set $E=r^{-1}(D)\cap B$. Observe that $E$ is a graded compact open bisection of $\mg_U$. We claim that $D=r(E)$. Obviously, $r(E)\sub D$. Conversely, for any $v\in D\sub r(B)\cap U$ there exists $b\in B$ such that $v=r(b)\in U$. We have $b\in r^{-1}(D)\cap B=E$. Thus $v=r(b)\in r(E)$. The proof of claim is completed. Hence, we have \begin{equation}
\label{surj}
1_D\d_B=1_{r(E)}\d_B=1_{r(E)}\d_E=f(1_E)\in {\rm Im} f,
\end{equation} where we use the fact that $E\leq B$ in $\mg^{(h)}$. Next for any $a_B\d_B\in C_R(U)\rtimes_{\pi}\mg^{(h)}$ with $B\in \mg^{(h)}$, we can write $a_B$ as $\sum_{i=1}^nr_i1_{D_i}$ with $D_i$'s mutually disjoint compact open subset of $U_B$ and $r_i\in R$. By \eqref{surj}, we have $a_B\d_B\in {\rm Im}f$.

To prove injectivity of $f$, we define a map $g: C_R(U)\rtimes_{\pi}\mg^{(h)}\xra A_R(\mg_U)$. For each $B\in \mg^{(h)}$ and $a_B\d_B\in \mathcal{L}$, we define \begin{equation*}
g(a_B\d_B)=
\begin{cases}
a_B(r(x)), & \text{~if~} x\in B,\\
0, & \text{otherwise},
\end{cases}
\end{equation*} and extend it linearly to $\mathcal{L}$. Observe that the support of $g(a_B\d_B)$ is $B\cap r^{-1}\big(\supp (a_B)\big)$, for each $x\in \mg_U\cap B$, we have $x\in r^{-1}(N')\cap \mg_U\sub \big(r^{-1}(N)\cap \mg_U\big)$ with $r(x)\in N'\sub N$ and $N'\sub U_B$ an open subset, and for $x\in \mg_U$ and $x\notin B$ the set $\mg_U\setminus B$ is a neighbourhood of $x$ such that the restriction of $g(a_B\d_B)$ to it is zero. It follows that $g(a_B\d_B)$ is locally constant with compact support. For $B\leq C$ in $\mg^{(h)}$ and $a\in C_R(U_B)$, we have $g(a\d_B)=g(a\d_C)$ because of $r(x)\in r(C)\setminus r(B)$ and $r(x)\notin U_B$ for $x\in C\setminus B$.

Now we check that $gf=\id_{A_R(\mg_U)}$, which implies that $f$ is injective. It is evident that $(gf)(1_B)=g(1_{r(B)}\d_B)=1_B$ for each $B\in\mg_U^{(h)}$. For each element $a\in A_R(\mg_U)$, we write $a=\sum_{i=1}^nr_i1_{B_i}$ with $r_i\in R\setminus\{0\}$ and the $B_i$'s mutually disjoint graded compact open bisections of $\mg_U$. Obviously we have $(gf)(a)=a$, implying $gf=\id_{A_R(\mg_U)}$.
\end{proof}

\section{Partial skew group rings}\label{section3}

In this section, we consider a partial action of a discrete group on a locally compact Hausdorff topological space which has a basis of compact open sets. Abadie \cite[\S3]{abadie} proved that the $C^*$-algebra of the associated groupoid of the partial action agrees with the crossed product by the partial action. Given a partial action of a discrete group on a set, the partial skew group ring is defined as an algebra associated to it. Applying Theorem \ref{thmpsisr}, we prove that the Steinberg algebra of the associated groupoid of the partial action is graded isomorphic to the partial skew group ring. We describe the graded ideals of the partial skew group ring and give a partial skew group ring which arises from a group.

\subsection{Partial skew group rings} \label{subsection31}

A partial action of a group, appeared in various areas of mathematics, in particular, in the theory of operator algebras as a powerful tool in their study (see \cite{abadie1,exel1997,quiggraeburn}).

We recall the definition of partial action of a group on a set. Let $G$ be a group with $\varepsilon$ the identity of the group. A \emph{partial action} \cite{ferrero} of $G$ on a set $X$ is a data $\phi=(\phi_{g}, X_g, X)_{g\in G}$, where for each $g\in G$, $X_g$ is a subset of $X$ and $\phi_g:X_{g^{-1}}\xra X_g$ is a bijection such that 
\begin{itemize}
\item[(i)]$X_{\varepsilon} =X$ and $\phi_{\varepsilon}$ is the identity on $X$, where $\varepsilon$ is the identity of the group $G$;

\item[(ii)] $\phi_g(X_{g^{-1}}\cap X_h) = X_g\cap X_{gh}$ for all $g, h\in G$;

\item[(iii)] $\phi_g(\phi_h(x)) =\phi_{gh}(x)$ for all $g, h\in G$ and $x\in X_{h^{-1}}\cap X_{h^{-1}g^{-1}}$.
\end{itemize}
Since a group is an inverse semigroup, the definition for a partial action of $G$ on a set $X$ can be obtained by Definition \ref{defpar} and \cite[Proposition 3.4]{bussexel}. In case that $X$ is a topological space, each $X_g\sub X$ is an open subset and each $\phi_g:X_{g^{-1}}\xra X_g$ is a homoemorphism.

\begin{lemma} \label{inducedpartial}
Let $\phi=(\phi_g, X_g, X)_{g\in G}$ be a partial action of a group $G$ on a set $X$ and $\psi:G\rightarrow H$ a group homomorphism. Then $\psi$ induces a partial action $\overline{\phi}$ of $H$ on $X$ if and only if for every $g \in \ker\psi$, $X_g=X_{g^{-1}}$ and $\phi_g=\id$. 
\end{lemma}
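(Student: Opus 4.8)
The plan is to make the obvious candidate for $\overline\phi$ explicit and then check that it is well defined and obeys the axioms. Writing $N=\ker\psi$, which is automatically normal in $G$, I would set, for each $h\in H$,
\[
\overline X_h=\bigcup_{g\in\psi^{-1}(h)}X_g,\qquad \overline\phi_h(x)=\phi_g(x)\ \text{ whenever } x\in X_{g^{-1}}\text{ and }\psi(g)=h.
\]
Under this reading, ``$\psi$ induces a partial action'' means precisely that the formula for $\overline\phi_h$ is independent of the chosen representative $g\in\psi^{-1}(h)$ with $x\in X_{g^{-1}}$, and that the resulting family $\overline\phi=(\overline\phi_h,\overline X_h,X)_{h\in H}$ satisfies (i)--(iii) of the group partial-action definition.

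For the forward implication I would only need the component at the identity of $H$. If $\overline\phi$ is well defined, then for $k\in N$ the two representatives $\varepsilon$ and $k$ of the identity of $H$ must give the same map on $X_{\varepsilon^{-1}}\cap X_{k^{-1}}=X_{k^{-1}}$; since $\phi_\varepsilon=\id$ this forces $\phi_k(x)=x$ for all $x\in X_{k^{-1}}$, i.e. $\phi_k=\id$, whence $X_k=\phi_k(X_{k^{-1}})=X_{k^{-1}}$. This yields the stated kernel condition at once.

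The substance is the converse, and the one genuine obstacle is well-definedness: because the domains $X_{g^{-1}}$ need not be constant along a coset of $N$, two representatives $g$ and $g'=gk$ (with $k\in N$) of the same $h$ can both see a point $x\in X_{g^{-1}}\cap X_{(gk)^{-1}}$, and I must show $\phi_g(x)=\phi_{gk}(x)$. Here I would exploit normality of $N$. Put $a=\phi_g(x)$ and $k''=gkg^{-1}\in N$. Applying axiom (ii) with $h=g^{-1}(k'')^{-1}$ gives $a=\phi_g(x)\in X_g\cap X_{(k'')^{-1}}$, so $a$ lies in the domain of $\phi_{k''}$; and axiom (iii) applied to the product $k''\cdot g=gk$, whose domain condition is exactly $X_{g^{-1}}\cap X_{(gk)^{-1}}$ (where $x$ lives), gives $\phi_{k''}(a)=\phi_{gk}(x)$. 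Since $k''\in N$, the kernel hypothesis says $\phi_{k''}=\id$ on its domain, so $\phi_{k''}(a)=a$; comparing, $\phi_g(x)=a=\phi_{gk}(x)$, which is the required independence of representative. This normality trick is the crux of the whole argument.

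Once well-definedness is secured, I expect the rest to be routine bookkeeping. Each $\overline\phi_h$ is a bijection $\overline X_{h^{-1}}\to\overline X_h$ with inverse $\overline\phi_{h^{-1}}$ because the $\phi_g$ are bijections; axiom (i) holds since $X=X_\varepsilon\subseteq\overline X_\varepsilon$ forces $\overline X_\varepsilon=X$ and the gluing of the maps $\phi_k=\id$ ($k\in N$) is $\id$; and axioms (ii)--(iii) for $\overline\phi$ reduce, after choosing a representative in each relevant coset, to axioms (ii)--(iii) for $\phi$ together with the independence of representative just established. I would flag that these reductions need mild care, since a point of $\overline X_h$ may be visible only through a representative different from the one initially chosen, but no idea beyond the normality trick is required to settle them.
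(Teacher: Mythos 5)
Your proposal is correct and takes essentially the same route as the paper: the forward direction compares the representatives $\varepsilon$ and $k$ of the identity of $H$, and your well-definedness argument via $k''=gkg^{-1}$ is exactly the paper's computation $\phi_{g_1}(x)=\phi_{g_2g_1^{-1}}(\phi_{g_1}(x))=\phi_{g_2}(x)$ with $g_1=g$, $g_2=gk$. Both treatments then dispatch the remaining partial-action axioms as routine (the paper checks injectivity of $\overline{\phi}_h$ explicitly while you invoke $\overline{\phi}_{h^{-1}}$ as a two-sided inverse, which is a valid and slightly cleaner way to finish).
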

\begin{proof}
We first observe that if $g \in \ker\psi$, then $X_g=X_{g^{-1}}$ and $\phi_g=\id$ is equivalent to if $\psi(g_1)=\psi(g_2)$, then $\phi_{g_1}|_{X_{g_1^{-1}}\cap X_{g_2^{-1}}}=\phi_{g_2}|_{X_{g_1^{-1}}\cap X_{g_2^{-1}}}$. Let $x\in X_{g_1^{-1}}\cap X_{g_2^{-1}}$, where $\psi(g_1)=\psi(g_2)$. Then $\phi_{g_1}(x) \in X_{g_1}\cap X_{g_1g_2^{-1}}$. Since $g_2g_1^{-1} \in \ker \psi$, $X_{g_1g_2^{-1}}=X_{g_2g_1^{-1}}$ and $\phi_{g_2 g_1^{-1}}=\id$. Thus 
\[
\phi_{g_1}(x) =\phi_{g_2 g_1^{-1}}(\phi_{g_1}(x))=\phi_{g_2}\phi_{g_1^{-1}}(\phi_{g_1}(x))=\phi_{g_2}(x).
\] On the other hand, suppose $\psi(g)={\varepsilon}$. Then we have $\phi_g|_{X_{g^{-1}}\cap X_{\varepsilon}}=\phi_{\varepsilon}|_{X_{g^{-1}}\cap X_{\varepsilon}}$. Since $X_{\varepsilon}=X$ and $\phi_{\varepsilon}=\id$, we immediately obtain that $X_g=X_{g^{-1}}$ and $\phi_g=\id$. 

Suppose now we have an induced partial action $H$ on $X$. Then for $h\in H$, $X_h=\bigcup_{\psi(g)=h}X_g$ and $\overline{\phi}_{h}:X_{h^{-1}}\rightarrow X_h$ defined by $\overline{\phi}_{h}(x):=\phi_g(x)$, where $\psi(g)=h$, is well-defined. Thus if $\psi(g_1)=\psi(g_2)$, then we should have $\phi_{g_1}|_{X_{g_1^{-1}}\cap X_{g_2^{-1}}}=\phi_{g_2}|_{X_{g_1^{-1}}\cap X_{g_2^{-1}}}$. By the first part of the proof it follows that for every $g \in \ker\psi$, $X_g=X_{g^{-1}}$ and $\phi_g=\id$. 

Conversely, considering $X_h=\bigcup_{\psi(g)=h}X_g$ and defining $\overline{\phi}_{h}:X_{h^{-1}}\rightarrow X_h$ by \begin{equation}\label{definduced}
\overline{\phi}_{h}(x):=\phi_g(x),
\end{equation} where $\psi(g)=h$, by the assumption it follows that  $\overline{\phi}_{h}$ is well-defined. We first show that $\overline{\phi}_{h}$ is bijective. Suppose $\overline{\phi}_{h}(x_1)=\overline{\phi}_{h}(x_2)$. There are $g_1, g_2\in G$ such that $\psi(g_1)=\psi(g_2)=h$, $x_1\in X_{g_1^{-1}}$, $x_2\in X_{g_2^{-1}}$ and $\phi_{g_1}(x_1)=\phi_{g_2}(x_2)\in X_{g_1}\cap X_{g_2}$. Thus 
\[x_2=\phi_{g_2^{-1}}\phi_{g_2}(x_2)=\phi_{g_2^{-1}}\phi_{g_1}(x_1)=\phi_{g_2^{-1}g_1}(x_1)=\id(x_1)=x_1.
\] Surjectivity of $\overline{\phi}_{h}$ and the rest of the conditions of partial actions are straightforward. 
\end{proof}

\begin{cor}
Let $\phi=(\phi_g, X_g, X)_{g\in G}$ be a partial action of a group $G$ on a set $X$ and $H$ a normal subgroup of $G$. Then there is an induced partial action 
$\overline{\phi}$ of $G/H$ on $X$ if and only if for every $h \in H$, $X_h=X_{h^{-1}}$ and $\phi_h=\id$. 
\end{cor}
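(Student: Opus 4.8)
The plan is to derive this statement directly from Lemma \ref{inducedpartial}, which is stated for an arbitrary group homomorphism $\psi : G \to H$. The key observation is that quotienting by a normal subgroup is precisely the special case of the lemma in which $\psi$ is a surjection with the prescribed kernel, so no new argument is needed.

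First I would let $\psi : G \to G/H$ denote the canonical quotient homomorphism $g \mapsto gH$. Because $H$ is normal in $G$, the quotient $G/H$ is a group, $\psi$ is a group homomorphism, and $\ker\psi = H$; these are standard facts requiring no further justification. I would then apply Lemma \ref{inducedpartial} with this $\psi$, whose codomain is $G/H$. The lemma asserts that $\psi$ induces a partial action $\overline{\phi}$ of $G/H$ on $X$ if and only if $X_g = X_{g^{-1}}$ and $\phi_g = \id$ for every $g \in \ker\psi$. Since $\ker\psi = H$, this condition reads exactly as ``$X_h = X_{h^{-1}}$ and $\phi_h = \id$ for every $h \in H$,'' which is the condition in the corollary, and both implications follow simultaneously.

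The one point I would flag explicitly is the notational collision: in Lemma \ref{inducedpartial} the letter $H$ names the target of $\psi$, whereas in the corollary $H$ names the normal subgroup being factored out. To avoid confusion I would state clearly that the lemma is invoked with target group $G/H$ and with $\psi$ the quotient map, so that the lemma's ``$\ker\psi$'' becomes the corollary's ``$H$.'' There is no genuine obstacle here, since the entire content is already packaged in the lemma; the proof reduces to identifying the correct instance and recording that the quotient map is a homomorphism with kernel $H$.
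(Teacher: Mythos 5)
Your proposal is correct and matches the paper's intent exactly: the paper states this corollary immediately after Lemma \ref{inducedpartial} without proof, precisely because it is the special case of that lemma with $\psi:G\to G/H$ the canonical quotient map, whose kernel is $H$. Your explicit handling of the notational clash (the lemma's target group $H$ versus the corollary's normal subgroup $H$) is a sensible clarification but introduces no new mathematical content.
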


Suppose that $X$ is a topological space, $\phi=(\phi_g, X_g, X)_{g\in G}$ a partial action of a group $G$ on $X$ and $\psi:G\xra H$ a group homomorphism. Then $\psi$ induces a partial action $\overline{\phi}$ of $H$ on $X$ if and only if for every $g \in \ker\psi$, $X_g=X_{g^{-1}}$ and $\phi_g=\id$. We only need to check that the map $\overline{\phi}_h$ given in \eqref{definduced} is a homeomorphism for each $h\in G$. First, we show that $\overline{\phi}_h$ is continuous. For any open subset $O$ of $X_h=\bigcup_{\psi(g)=h}X_g$, we have $\overline{\phi}_h^{-1}(O)=\bigcup_{\psi(g)=h}\phi_{g^{-1}}(O\cap X_{g^{-1}})$. It follows that $\overline{\phi}_h^{-1}(O)$ is open, and thus $\overline{\phi}_h$ is continuous. Similarly, we can prove that $\overline{\phi}^{-1}_h$ is continuous. So $\overline{\phi}_h$ is a homeomorphism for each $h\in G$.

The group $G$ is an inverse semigroup with $g\leq h$ when $g=h$ in $G$. The partial skew inverse semigroup ring $A \rtimes_{\phi} G$ equals the algebra $\LL$ given in Definition \ref{defpartialinverse}. The ring $A \rtimes_{\phi} G$ is called the \emph{partial skew group ring} \cite[Definition 1.2]{de}  corresponding to $\phi$. Observe that $A \rtimes_{\phi}G$ is always a $G$-graded ring with $(A \rtimes_{\phi}G)_g=A_g  \d_g$ induced by the identity map from $G$ to $G$.

Our first result shows that similar to skew group rings, one can lift some of the properties on $A_g$, $g\in G$ to the whole ring $A \rtimes_{\phi} G$. Recall that a ring $A$ (not necessarily unital) is called \emph{von Neumann regular} in case for every $x \in A$ there exists $y\in A$ such that $x = xyx$. A $G$-graded ring $A$ is called a \emph{graded von Neumann regular} ring, if for any homogeneous element $x \in A$, there is $y\in A$ such that $xyx = x$. Note that $y$ can be chosen to be a homogeneous element. Throughout the note, we call such rings also graded regular rings.

\begin{lemma}\label{lemmavon}
Let $\phi=(\phi_g, X_g, X)_{g\in G}$ be a partial action of $G$ on a  ring $A$ and $A\rtimes_\phi G$ the partial skew group ring. Then  $A\rtimes_\phi G$ is graded von Neumann regular if and only if $A_g$, $g \in G$, are von Neumann regular. 
\end{lemma}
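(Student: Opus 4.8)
The plan is to reduce the regularity of a single homogeneous element of $A\rtimes_\phi G$ to the regularity of a single element of one ideal $A_g$, using that $\phi_g\colon A_{g^{-1}}\to A_g$ is a ring isomorphism. Recall first that, since $G$ carries the trivial partial order as an inverse semigroup, $A\rtimes_\phi G$ equals the algebra $\LL$, so its homogeneous component of degree $g$ is exactly $A_g\d_g$ and every homogeneous element of degree $g$ has the form $a\d_g$ with $a\in A_g$. The computational heart of the argument is the identity
\begin{equation*}
(a\d_g)(b\d_{g^{-1}})(a\d_g)=\phi_g\big(a_0\, b\, a_0\big)\d_g,\qquad a_0:=\phi_{g^{-1}}(a),
\end{equation*}
valid for $a\in A_g$ and $b\in A_{g^{-1}}$. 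I would verify it directly from the multiplication rule: $(a\d_g)(b\d_{g^{-1}})=\phi_g(\phi_{g^{-1}}(a)b)\d_{\varepsilon}=\phi_g(a_0 b)\d_{\varepsilon}$, and then multiplying by $a\d_g$ and using $\phi_{\varepsilon}=\id$, $a=\phi_g(a_0)$, together with the fact that $\phi_g$ is a ring homomorphism, gives $\phi_g(a_0 b)\phi_g(a_0)=\phi_g(a_0 b a_0)$. Because $\phi_g$ is injective, $(a\d_g)(b\d_{g^{-1}})(a\d_g)=a\d_g$ holds if and only if $a_0 b a_0=a_0$ in $A_{g^{-1}}$.

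For the forward implication, assume $A\rtimes_\phi G$ is graded von Neumann regular and fix $g\in G$ and $a\in A_g$. By the definition of graded regularity (and the accompanying remark) there is a \emph{homogeneous} quasi-inverse $y$ with $(a\d_g)\,y\,(a\d_g)=a\d_g$. Comparing degrees forces $y=b\d_{g^{-1}}$ for some $b\in A_{g^{-1}}$, since $g\,\deg(y)\,g=g$ means $\deg(y)=g^{-1}$. The displayed identity then yields $a_0 b a_0=a_0$, i.e. $a_0=\phi_{g^{-1}}(a)$ is regular in $A_{g^{-1}}$. As $a$ runs over $A_g$ and $\phi_{g^{-1}}\colon A_g\to A_{g^{-1}}$ is a bijection, $a_0$ runs over all of $A_{g^{-1}}$; hence $A_{g^{-1}}$ is von Neumann regular. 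Since $g$ was arbitrary, every $A_g$ is von Neumann regular.

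For the converse, assume each $A_g$ is von Neumann regular. Each $A_g$ is then idempotent (from $a=aza$ one gets $A_g=A_g^2$), so $\LL=A\rtimes_\phi G$ is associative and the discussion above applies. Given a homogeneous element $a\d_g$, put $a_0=\phi_{g^{-1}}(a)\in A_{g^{-1}}$ and choose, by regularity of $A_{g^{-1}}$, an element $b\in A_{g^{-1}}$ with $a_0 b a_0=a_0$. By the identity, the homogeneous element $b\d_{g^{-1}}$ satisfies $(a\d_g)(b\d_{g^{-1}})(a\d_g)=a\d_g$. As this covers every homogeneous element of $A\rtimes_\phi G$, the ring is graded von Neumann regular.

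The only genuine obstacle is the middle computation: the step where $\phi_g(a_0 b)\phi_g(a_0)$ must be recombined as $\phi_g(a_0 b a_0)$. This is exactly where one needs $\phi_g$ to be a ring homomorphism (not merely a bijection), and where one must keep track that both $a_0 b$ and $a_0$ lie in the ideal $A_{g^{-1}}$ so that the homomorphism property applies. Everything else — the degree bookkeeping that pins the quasi-inverse to degree $g^{-1}$, and the bijectivity of $\phi_{g^{-1}}$ that lets a single $a\in A_g$ witness regularity for an arbitrary element of $A_{g^{-1}}$ — is routine.
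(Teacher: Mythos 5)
Your proof is correct and takes essentially the same route as the paper: both rest on the single computation $(a\d_g)(b\d_{g^{-1}})(a\d_g)=a\,\phi_g(b)\,a\,\d_g=\phi_g\big(\phi_{g^{-1}}(a)\,b\,\phi_{g^{-1}}(a)\big)\d_g$ together with the degree bookkeeping that pins a homogeneous quasi-inverse to degree $g^{-1}$. The only differences are cosmetic: the paper reads off regularity of $a$ in $A_g$ with quasi-inverse $\phi_g(b)$, whereas you use injectivity of $\phi_g$ to read off regularity of $\phi_{g^{-1}}(a)$ in $A_{g^{-1}}$ (equivalent, since $\phi_{g^{-1}}$ is a bijection), and you write out explicitly the converse and the idempotency/associativity point that the paper dispatches with ``follows similarly.''
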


\begin{proof} Suppose that $A  \rtimes_\phi \G$ is graded regular. For any homogeneous element $a_g\delta_g\in A  \rtimes_\phi \G$, there exists an element $b_{g^{-1}}\delta_{g^{-1}}$ such that $a_g\delta_g \cdot b_{g^{-1}}\delta_{g^{-1}} \cdot a_g\delta_g=a_g\delta_g$. We observe that $a_g\delta_g \cdot b_{g^{-1}}\delta_{g^{-1}} \cdot a_g\delta_g=\phi_g(\phi_{g^{-1}}(a_g) b_{g^{-1}})\delta_{\varepsilon}\cdot a_g\delta_g=\phi_g(\phi_{g^{-1}}(a_g))\phi_g(b_{g^{-1}})a_g\delta_g=a_g \phi_g(b_{g^{-1}})a_g\delta_g=a_g\delta_g$.  Thus $a_g=a_g \phi_g(b_{g^{-1}})a_g$ and $A_g$ is regular for each $g\in G$. Conversely, the statement follows similarly. 
\end{proof}

 From now on, we assume that $X$ is a locally compact Hausdorff topological space which is totally disconnected and $G$ is a discrete group. Recall that a topological space is said to be totally disconnected if its topology is generated by clopen subsets. Let $\phi=(\phi_g, X_g, X)_{g\in G}$ be a partial action of $G$ on $X$. We assume that each $X_g$ is a clopen subset of $X$.  Consider the $G$-graded groupoid \begin{equation}
 \label{groupoid}
 \mg_X=\bigcup_{g \in G} g \times X_g,
 \end{equation} whose composition and inverse maps are given by $(g,x)(h,y)=(gh,x)$ and $(g,x)^{-1}=(g^{-1},\phi_{g^{-1}}(x))$. Here the range and source maps are given by $r(g,x)=(\varepsilon, x)$, $d(g,x)=(\varepsilon, \phi_{g^{-1}}(x))$ with $\varepsilon$ the identity of $G$.  The unit space of $\mg_X$ is identified with $X$. The topology of $\mg_X$ is inherited from the product topology $G\times X$. Notice that $\mg_X$ is a locally compact Hausdorff groupoid. 

 The groupoid $\mg_X$ is ample. In fact, $\mg_X$ admits a left Haar system (refer to \cite[Proposition 2.2]{abadie}). Combing \cite[Proposition 2.8]{re}, the range map $r:\mg_X\xra \mg_X^{(0)}$ is a local homeomorphism, equivalently $\mg_X$ is $\acute{\rm e}$tale. Recall that a locally compact Hausdorff $\acute{\rm e}$tale groupoid $\mg_X$ is ample if and only if $\mg_X^{(0)}$ admits a basis of close and open subsets (see \cite[Proposition 4.1]{exel}).

A subset $V\subseteq X$ is \emph{invariant} if for any $g\in G$, we have ${\phi_g}|_V\subseteq V$, equivalently, $\phi_{g^{-1}}(X_g\cap V)\subseteq X_{g^{-1}}\cap V$. If, in addition, $V$ is open, then the restriction $\phi|_V=(\phi_g, X_g\cap V, V)_{g\in G}$ is a partial action of $G$ on $V$.

We observe that there is a one-to one correspondence between the (open) invariant subsets of $X$ and the (open) invariant subsets of $\mg_X^{(0)}$, assigning $V\subseteq X$ to $\varepsilon\times V \subseteq \mg_X^{(0)}$. This gives a subgroupoid $\mg_V=\bigcup_{g\in G}g\times V_g$ of $\mg_X$ with $V_g=V\cap X_g$. 

The canonical map $\mg_X\xra G, (g, x)\mapsto g$ is a continuous cocycle which makes $\mg_X$ a $G$-graded groupoid. Thus the  Steinberg algebra $A_R(\mg_X)$ is a $G$-graded
algebra with homogeneous components
\[
    A_{R}(\mg_X)_{g} = \{f\in A_{R}(\mg)\mid \supp(f)\subseteq g\times X_g\}.\]

Recall that $\mg_X^{(h)}$ denotes the collection of graded compact open bisections of $\mg_X$ which forms a $G$-graded inverse semigroup. Let $U\sub X$ be an open invariant subset. By Subsection \ref{subsection22}, there is a partial action $\pi=(\pi_B, U_B, U)_{B\in \mg_X^{(h)}}$ of the inverse semigroup $\mg_X^{(h)}$ on $U$ with $U_B=r(B)\cap U$. We still denote by $\pi$ the induced partial action $(\pi_B, C_R(U_B), C_R(U))_{B\in\mg_X^{(h)}}$ of the inverse semigroup $\mg_X^{(h)}$ on the algebra $C_R(U)$.

In order to use Theorem~\ref{thmpsisr} to express a partial skew group ring as a Steinberg algebra,  we need Exel's inverse semigroup $S(G)$ associated to a group $G$~\cite{exel1998}. We recall the construction here. 

\begin{deff} Let $G$ be a group with unit $\varepsilon$. We define  $S(G)$ to be the semigroup generated by $\{[g]\;|\; g\in G\}$ subject to the following relations: for $g, h\in G$,
\begin{itemize}

\item [(i)] $[g^{-1}][g][h] = [g^{-1}][gh]$, 

\item[(ii)] $[g][h][h^{-1}] = [gh][h^{-1}]$,

\item[(iii)] $[g][\varepsilon] = [g]$.
\end{itemize}
\end{deff}
Observe that $[\varepsilon][g]=[gg^{-1}][g]=[g][g^{-1}][g]=[g][g^{-1}g]=[g][\v]=[g]$. Then $S(G)$ is a semigroup with unit $[\varepsilon]$. 

The following is a universal property of $S(G)$.

\begin{lemma}\cite[Proposition 2.2]{exel1998} Given a semigroup $\SS$, a group $G$, and a map $f : G\xra \SS$ satisfying for $g, h\in G$
\begin{itemize}
\item[(i)] $f(g^{-1})f(g)f(h) = f(g^{-1})f(gh)$, 
\item[(ii)] $f(g)f(h)f(h^{-1}) = f(gh)f(h^{-1})$,
\item[(iii)] $f(g)f(\v) = f(g)$,
\end{itemize}
there exists a unique homomorphism $\overline f : S(G) \xra\SS$ such that $\overline f([g]) = f(g)$.
\end{lemma}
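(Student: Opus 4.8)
The plan is to invoke the universal property of a semigroup presented by generators and relations, so the whole argument is formal once this is set up correctly. By construction $S(G)$ is the quotient of the free semigroup $F$ on the alphabet $\{[g]\mid g\in G\}$ by the smallest congruence $\sim$ forcing relations (i)--(iii) to hold. First I would extend $f$ to a semigroup homomorphism $\widetilde f\colon F\to\SS$ in the only possible way, sending a word $[g_1][g_2]\cdots[g_n]$ to the product $f(g_1)f(g_2)\cdots f(g_n)$; this is automatically a well-defined homomorphism because $F$ is free on the generators.

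The key step is to show that $\widetilde f$ factors through $\sim$, that is, $u\sim v$ in $F$ implies $\widetilde f(u)=\widetilde f(v)$ in $\SS$. Since $\sim$ is generated as a congruence by the three defining relations, it suffices to verify that $\widetilde f$ carries each generating relation to a genuine equality in $\SS$; closure under left and right multiplication and under transitivity is then automatic, as $\widetilde f$ is a homomorphism into a semigroup. But carrying the generating relations to equalities is exactly the content of the hypotheses: applying $\widetilde f$ to (i) produces $f(g^{-1})f(g)f(h)=f(g^{-1})f(gh)$, applying it to (ii) produces $f(g)f(h)f(h^{-1})=f(gh)f(h^{-1})$, and applying it to (iii) produces $f(g)f(\varepsilon)=f(g)$, each holding by assumption. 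Hence $\widetilde f$ descends to a homomorphism $\overline f\colon S(G)\to\SS$ with $\overline f([g])=f(g)$.

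For uniqueness I would note that $S(G)$ is generated as a semigroup by $\{[g]\mid g\in G\}$, so any homomorphism out of $S(G)$ is determined by its values on these generators; thus $\overline f$ is the unique homomorphism satisfying $\overline f([g])=f(g)$. There is essentially no genuine obstacle here, as this is the standard ``mapping-out'' property of a presentation; the only point needing care is the purely bookkeeping one of making precise that $S(G)$ is a presented semigroup (not a monoid or group) and that the three displayed identities for $f$ mirror the three defining relations of $S(G)$ verbatim. In particular, no separate check of the auxiliary identity $f(\varepsilon)f(g)=f(g)$ is required, since $[\varepsilon][g]=[g]$ already holds in $S(G)$ and $\overline f$, being a homomorphism, respects every consequence of the defining relations.
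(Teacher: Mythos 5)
Your proof is correct. The paper gives no proof of this lemma at all---it is quoted directly from Exel \cite[Proposition 2.2]{exel1998}---and your argument is precisely the standard universal-property argument for a semigroup presented by generators and relations that underlies the cited result: since $S(G)$ is the free semigroup on $\{[g]\mid g\in G\}$ modulo the congruence generated by relations (i)--(iii), and hypotheses (i)--(iii) on $f$ say exactly that the induced homomorphism on the free semigroup identifies each generating pair, the map factors through the quotient, while uniqueness is immediate because the $[g]$ generate $S(G)$.
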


We recall some facts about $S(G)$, referring the reader to \cite[\S 2]{exel1998} for more details. For $g \in G$, define $\v_g = [g][g^{-1}]$. It is not difficult to see that $\v_g = \v_g^2$ and $[h]\v_g = \v_{hg}[h]$, for $g,h \in G$. Another interesting property is that any element $s\in S(G)$ admits a decomposition
$$s=\v_{l_1}\cdots \v_{l_n}[g],$$
where $n\geq 0$ and $l_1,\cdots, l_n, g\in G$. This decomposition is unique, except for the order of the $l_j$. We will call it the \emph{standard form} of $s$. Take an idempotent $e = \v_{e_1}\cdots \v_{e_n} [j] \in S(G)$, with $e_1,\cdots, e_n, j \in G$. By the uniqueness of decomposition of $S(G)$, $j$ must be the unit of the group. So $e=\v_{e_1}\cdots \v_{e_n}$. 

\begin{rmk} \label{rmkorder}For $r=\v_{r_1}\cdots \v_{r_n}[h]$ and $s=\v_{i_1}\cdots \v_{i_m}[g]$ in $S(G)$, if $s\leq r$ we have that $s=rf$, for some $f \in S(G)$ an idempotent. Then $f =\v_{f_1} \cdots \v_{f_k}$ and we have that:
$$s=\v_{i_1} \cdots \v_{i_m} [g] = \v_{r_1} \cdots \v_{r_n} [h]\v_{f_1} \cdots \v_{f_k} = \v_{r_1} \cdots \v_{r_n} \v_{hf_1} \cdots \v_{hf_k} [h].$$
By the uniqueness of the decomposition in $S(G)$, it follows that $g = h$ and ${i_1,\cdots,i_m} = {r_1,\cdots,r_n,hf_1,\cdots,hf_k}$. So, the difference between $s$ and $r$ are some $\v$'s.
\end{rmk}

 For every group $G$ and any set $X$, there is a one-to-one correspondence between partial actions of $G$ on $X$ and actions of $S(G)$ on $X$ (see \cite[Theorem 4.2]{exel1998}). Similarly, for an algebra $A$, there is a bijection between the partial actions of $G$ on $A$ and the actions of $S(G)$ on $A$ (see \cite[Theorem 2.9]{exelvieira}).

We recall that $\phi=(\phi_g, X_g, X)_{g\in G}$ induces a partial action of $G$ on $C_R(X)$, still denoted by $\phi$. For each $g\in G$, $\phi_g:C_R(X_{g^{-1}})\xra C_R(X_g)$ is given  by $\phi_g(f)=f\circ \phi_{g^{-1}}$, which is an $R$-isomorphism with the inverse $\phi_{g^{-1}}:C_R(X_{g})\xra C_R(X_{g^{-1}})$. 
 The ring $C_R(X)$ is idempotent and thus by \cite[Corollary 3.2]{de}, $C_R(X)\rtimes_{\phi}G$ is associated. We observe that $C_R(X)\rtimes_{\phi}G$ is a $G$-graded algebra where  $f_g\d_g$ for $f_g\in C_R(X_g)$ are homogeneous elements of degree $g$. 

For an open invariant subset $U$ of $X$, we have the restriction partial action $\phi|_U=(\phi_g, U_g, U)_{g\in G}$ with $U_g=X_g\cap U$. Then we have a partial action $(\phi_g, C_R(U_g), C_R(U))_{g\in G}$ of $G$ on an algebra $C_R(U)$, which is still denoted by $\phi$. There is a partial action $\phi'=(\phi'_s, E_s, C_R(U))_{s\in S(G)}$ of the inverse semigroup $S(G)$ on $C_R(U)$. For $s=\v_{i_1}\cdots\v_{i_n}[g]$ with $n\geq 0$ and $i_1, \cdots, i_n, g\in G$, $E_s=C_R(U_g)\cap C_R(U_{i_1})\cap\cdots\cap C_R(U_{i_n})$, $E_{s^*}=C_R(U_{g^{-1}})\cap C_R(U_{g^{-1}i_1})\cap\cdots\cap C_R(U_{g^{-1}i_n})$ and $\phi'_s:E_{s^*}\xra E_s$ is the restriction of $\phi_g:C_R(U_{g^{-1}})\xra C_R(U_g)$ to $E_{s^*}$ (refer to \cite[\S 2]{exelvieira} for details). Observe that the partial skew inverse semigroup ring $C_R(U)\rtimes_{\phi'}S(G)$ is a $G$-graded algebra. In fact, in this case $\LL=\{\sum_{s\in S(G)}a_s\d_s\;|\; a_s\in E_s\}$. The element $a\d_s\in \LL$ is of degree $g$ with $s=\v_{i_1}\cdots\v_{i_n}[g]$. By Remark \ref{rmkorder}, $a\d_s-a\d_r$ with $s,r\in S(G)$, $s\leq r$ and $a\in E_s$ is a homogeneous element and the quotient algebra $C_R(U)\rtimes_{\phi'}S(G)$ of $\LL$ is $G$-graded.

Note that in $C_R(U)\rtimes_{\phi'}S(G)$ \begin{equation}\label{property}\begin{split} 
 a\d_{[g][h]}=a\d_{[gh]} \text{~for~} a\in E_{[g][h]}, ~~~~
 a\d_{\v_{i_1}\cdots\v_{i_n}[g]}=a\d_{[g]} \text{~for~} a\in E_{\v_{i_1}\cdots\v_{i_n}[g]}.
\end{split}\end{equation} 
To see that $a \d_{[g][h]}=a\d_{[gh]}$ for $a\in E_{[g][h]}$, note that  $[g][h]=[g][h][h^{-1}][h]=[gh][h^{-1}][h]\leq [gh]$. 

We are in a position to prove the main result of this section. 

\begin{prop}\label{propss}
Suppose that $X$ is a locally compact Hausdorff topological space which is totally disconnected and $G$ a discrete group. Let $\phi=(\phi_g, X_g, X)_{g\in G}$ be a partial action of $G$ on $X$ where $X_g$'s are clopen subsets of $X$. Let
 $U$ be a clopen invariant of $X$. Then there are $G$-graded $R$-algebra isomorphisms  
\begin{equation}\label{isoes} A_R(\mg_U) \cong_{\gr}C_R(U)\rtimes_{\pi}\mg_X^{(h)}\cong_{\gr} C_R(U) \rtimes_{\phi'} S(G)\cong_{\gr} C_R(U) \rtimes_\phi G.
\end{equation}In particular, there are $G$-graded $R$-algebra isomorphisms$$ A_R(\mg_X) \cong_{\gr}C_R(X)\rtimes_{\pi}\mg_X^{(h)}\cong_{\gr} C_R(X) \rtimes_{\phi'} S(G)\cong_{\gr} C_R(X) \rtimes_\phi G.$$\end{prop}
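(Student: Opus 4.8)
The plan is to read the four algebras from left to right and connect consecutive ones, using Theorem \ref{thmpsisr} for the outermost link and Exel's semigroup $S(G)$ as the bridge between the geometric inverse semigroup $\mg_X^{(h)}$ and the group $G$. Since $X$ is itself a clopen invariant subset of $X$, the ``in particular'' statement is just the case $U=X$, so it is enough to handle a general clopen invariant $U$. The first isomorphism $A_R(\mg_U)\cong_{\gr}C_R(U)\rtimes_{\pi}\mg_X^{(h)}$ is immediate from Theorem \ref{thmpsisr}: the preceding discussion shows that $\mg_X$ is a $G$-graded ample Hausdorff groupoid, and $U$, identified with $\varepsilon\times U\subseteq\mg_X^{(0)}$, is an open invariant subset, so the theorem applies verbatim to $\mg=\mg_X$.

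For the rightmost isomorphism I would define $\Psi\colon C_R(U)\rtimes_{\phi'}S(G)\to C_R(U)\rtimes_{\phi}G$ on generators by $a\d_s\mapsto a\d_g$, where $g$ is the group part of the standard form $s=\v_{i_1}\cdots\v_{i_n}[g]$, together with its candidate inverse $a\d_g\mapsto a\d_{[g]}$. Remark \ref{rmkorder} shows that elements comparable in $S(G)$ share the same group part, so $\Psi$ respects the defining relation $a\d_s=a\d_r$ of the partial skew inverse semigroup ring; multiplicativity of both maps is exactly the content of \eqref{property}. Checking that the two maps are mutually inverse is then a short computation, and since $a\d_s$ of group part $g$ is homogeneous of degree $g$ on the left and $a\d_g$ is homogeneous of degree $g$ on the right, $\Psi$ is a graded isomorphism.

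For the middle isomorphism I would apply the universal property of $S(G)$ to the assignment $g\mapsto \{g\}\times X_g\in\mg_X^{(h)}$; verifying Exel's relations (i)--(iii) is a direct computation with the composition law of $\mg_X$ (for instance $f(g^{-1})f(g)=\{\varepsilon\}\times X_{g^{-1}}$). This yields a semigroup homomorphism $\overline f\colon S(G)\to\mg_X^{(h)}$, and a direct check on generators shows that $\phi'$ is the pullback of $\pi$ along $\overline f$, namely $E_s=C_R(U_{\overline f(s)})$ and $\phi'_s=\pi_{\overline f(s)}$. Consequently $a\d_s\mapsto a\d_{\overline f(s)}$ defines a graded homomorphism $\Phi\colon C_R(U)\rtimes_{\phi'}S(G)\to C_R(U)\rtimes_{\pi}\mg_X^{(h)}$. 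Surjectivity is the crucial point: every generator $a\d_B$ with $B=\{g\}\times D$ satisfies $B\leq\{g\}\times X_g=\overline f([g])$, so the defining relation of the partial skew inverse semigroup ring gives $a\d_B=a\d_{\overline f([g])}=\Phi(a\d_{[g]})$. For injectivity I would argue degree by degree: the relations \eqref{property} and $a\d_B=a\d_{\{g\}\times X_g}$ collapse both $g$-components onto a copy of $C_R(U_g)$, and faithfulness there is guaranteed by the graded isomorphism with $A_R(\mg_U)$ of Theorem \ref{thmpsisr}.

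I expect the middle isomorphism to be the main obstacle. The inverse semigroup $\mg_X^{(h)}$ is far larger than the image of $\overline f$, which only records the ``full'' bisections $\{g\}\times X_g$; the substance of the argument is that passing to the partial skew ring erases this discrepancy, and this rests squarely on the relations $a\d_B=a\d_{B'}$ for $B\leq B'$ built into Definition \ref{defpartialinverse}. The accompanying homogeneous-component bookkeeping, certifying that no unexpected collapse occurs on either side, is what makes Theorem \ref{thmpsisr} indispensable rather than merely convenient.
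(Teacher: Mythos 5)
Your proposal is correct and follows the same three-step decomposition as the paper, but two of the three links are handled genuinely differently. For the rightmost isomorphism the paper simply cites \cite[Theorem 3.7]{exelvieira} for the map $a\d_g\mapsto a\d_{[g]}$ and observes that it is graded; you reprove this fact directly, and your verification via Remark \ref{rmkorder} and \eqref{property} is sound. For the middle isomorphism the paper defines $\Psi(a_s\d_s)=a_s\d_{g\times U_g}$ by hand, checks multiplicativity by computing products of bisections, and proves injectivity by exhibiting an explicit left inverse $\Theta\big(\sum_i a_{B_i}\d_{B_i}\big)=\sum_i a_{B_i}\d_{[g_i]}$; you instead obtain the homomorphism structurally, from the universal property of $S(G)$ applied to $g\mapsto\{g\}\times X_g$ together with the pullback identities $E_s=C_R(U_{\overline f(s)})$ and $\phi'_s=\pi_{\overline f(s)}$ (your map agrees with the paper's in the quotient, since both $\overline f(s)$ and $g\times U_g$ lie below $\{g\}\times X_g$), and you prove injectivity degree by degree: both $g$-components are quotients of $C_R(U_g)$ via $a\mapsto a\d_{[g]}$ and $a\mapsto a\d_{\{g\}\times X_g}$, and the latter map is injective because under the isomorphism of Theorem \ref{thmpsisr} the element $a\d_{\{g\}\times X_g}$ corresponds to the function $(g,x)\mapsto a(x)$ in $A_R(\mg_U)$, which is nonzero when $a\neq 0$. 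This diagram argument (surjectivity onto the source component, injectivity into the target component) does yield injectivity of your $\Phi$, so the sketch is complete --- just note that faithfulness of the $S(G)$-side component is a \emph{consequence} of the argument rather than an input, so your phrase about ``no unexpected collapse on either side'' should be read as a conclusion, not a separate check. The trade-off: the paper's $\Theta$ keeps the middle isomorphism self-contained, whereas your route eliminates the computational verifications at the price of making that isomorphism depend on Theorem \ref{thmpsisr}. One caveat you share with the paper's own proof: writing $a\d_{\{g\}\times X_g}$ (or the paper's $a\d_{g\times U_g}$) presupposes that $X_g$ (respectively $U_g$) is compact, which clopen-ness alone does not guarantee; the paper's computation can be patched by decomposing $a=\sum_i r_i 1_{D_i}$ over compact open sets $D_i$ and using $\{g\}\times D_i$, but in your version this must be resolved \emph{before} invoking the universal property, since otherwise $g\mapsto\{g\}\times X_g$ does not land in $\mg_X^{(h)}$ at all.
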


\begin{proof} The first isomorphism in \eqref{isoes} follows from Theorem \ref{thmpsisr}. By \cite[Theorem 3.7]{exelvieira}, 
\begin{align*}
\varphi: C_R(U)\rtimes_{\phi}G&\longrightarrow C_R(U)\rtimes_{\phi'}S(G)\\
a\d_g&\longmapsto a\d_{[g]}
\end{align*}
 is an isomorphism of algebras. It is evident that $\varphi$ preserves the grading. Then the last isomorphism in \eqref{isoes} holds.  

It remains to prove that $C_R(U) \rtimes_{\phi'} S(G)\cong_{\gr} C_R(U)\rtimes_{\pi}\mg_X^{(h)}$. Define $$\Psi: C_R(U) \rtimes_{\phi'} S(G)\xra C_R(U)\rtimes_{\pi}\mg_X^{(h)}$$ such that $\Psi(a_s\d_s)=a_s\d_{g\times U_g}$ for $a\in E_s$ and $s=\v_{i_1}\cdots\v_{i_n}[g]$ with $n\geq 0$ and $i_1, \cdots, i_n, g\in G$. Observe that $g\times U_g$ is a graded compact open bisection of $\mg_X$. Then by Remark \ref{rmkorder} $\Psi$ is well defined. We claim that $\Psi$ is a homomorphism of $G$-graded algebras. For another element $a_t\d_t$ in $C_R(U) \rtimes_{\phi'} S(G)$, we write $t=\v_{l_1}\cdots\v_{l_m}[h]$ with $m\geq 0$ and $l_1, \cdots, l_m, h\in G$. On one hand, we have 
\begin{equation*} 
\Psi\big((a_s\d_s)(a_t\d_t)\big)
=\Psi\big((\phi'_s(\phi'_{s^{-1}}a_s)a_t)\d_{st}\big)
=\Psi\big((\phi_g(\phi_{g^{-1}}a_s)a_t)\d_{\v_{i_1}\cdots\v_{i_n}\v_{gl_1}\cdots\v_{gl_m}[gh]}\big)
=(\phi_g(\phi_{g^{-1}}a_s)a_t)\d_{gh\times U_{gh}}.
\end{equation*} 
On the other hand, we have 
\begin{equation} \label{second}
\begin{split}
\Psi(a_s\d_s)\Psi(a_t\d_t)&=(a_s\d_{g\times U_g})(a_t\d_{h\times U_h})\\
&=(\phi_g(\phi_{g^{-1}}a_s)a_t)\d_{(g\times U_g \cdot h\times U_h)}\\
&=(\phi_g(\phi_{g^{-1}}a_s)a_t)\d_{gh\times \big(U_g\cap \phi_g(U_h\cap U_{g^{-1}})\big)}\\
&=(\phi_g(\phi_{g^{-1}}a_s)a_t)\d_{gh\times U_{gh}}.
\end{split}
\end{equation} Here, the last equality in \eqref{second} holds, since $gh\times \big(U_g\cap \phi_g(U_h\cap U_{g^{-1}})\big)\sub gh\times U_{gh}$. It follows that $\Psi\big((a_s\d_s)(a_t\d_t)\big)
=\Psi(a_s\d_s)\Psi(a_t\d_t)$. It is evident that $\Psi$ preserves the grading. Thus the proof for the claim is completed.

The map $\Psi$ is surjective as the following argument shows. Take an element $a_B\d_B\in C_R(U)\rtimes_{\pi}\mg_X^{(h)}$ with $B\in \mg_X^{(h)}$. We write $B=g\times V$ for some $g\in G$ and $V\sub U_g$. Then we have $a_B\in C_R(V)=C_R(U_B)$, and $a_B\in C_R(U_g)$, since $V\sub U_g$. It follows that $a_B\d_B=a_B\d_{g\times V}=a_B\d_{g\times U_g}=\Psi(a_B\d_{[g]})$ in $C_R(U)\rtimes_{\pi}\mg_X^{(h)}$.

To prove the injectivity of $\Psi$, we define $\Theta: C_R(U)\rtimes_{\pi}\mg_X^{(h)}\xra C_R(U)\rtimes_{\phi'}S(G)$ such that $\Theta(\sum_{i=1}^na_{B_i}\d_{B_i})=\sum_{i=1}^na_{B_i}\d_{[g_i]}$ with $B_i\in \mg_X^{(h)}$ such that  $B_i=g_i\times V_i$ for some $g_i\in G$ and $V_i\sub U_{g_i}$. Observe that $a_{B_i}\in C_R(U_{B_i})=C_R(V_i)\sub C_R(U_{g_i})$. If $B\leq C$ in $\mg_X^{(h)}$ and $a_B\in C_R(U_B)$, then $B$ and $C$ are contained in the same $g\times U_g$ for some $g\in G$. We have $\Theta(a_B\d_B-a_B\d_C)=a_B\d_{[g]}-a_B\d_{[g]}=0$. Thus $\Theta$ is well defined. We can directly check that $\Theta\circ \Psi=\id_{C_R(U)\rtimes_{\phi'}S(G)}$, implying that $\Psi$ is injective. 
\end{proof}

\begin{cor} \label{corvon}Suppose that $X$ is a locally compact Hausdorff topological space which is totally disconnected and $G$ a discrete group. Let $\phi=(\phi_g, X_g, X)_{g\in G}$ be a partial action of $G$ on $X$ and $K$ a field. Then the Steinberg algebra $A_K(\mg_X)$ is $G$-graded von Neumann regular. 
\end{cor}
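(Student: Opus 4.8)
The plan is to reduce the statement to a concrete, essentially elementary fact about locally constant functions, using the two tools already established. By Proposition \ref{propss} there is a $G$-graded $R$-algebra isomorphism $A_K(\mg_X)\cong_{\gr}C_K(X)\rtimes_\phi G$. Since graded von Neumann regularity is clearly preserved by graded isomorphisms (a graded isomorphism carries homogeneous elements bijectively to homogeneous elements), it suffices to show that the partial skew group ring $C_K(X)\rtimes_\phi G$ is graded von Neumann regular. By Lemma \ref{lemmavon}, this in turn reduces to checking that each ideal $C_K(X_g)$ of $C_K(X)$ — the component playing the role of $A_g$ in that lemma — is a von Neumann regular ring.

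Thus the whole argument comes down to verifying the regularity of $C_K(X_g)$, which is where the hypothesis that $K$ is a field enters. First I would take an arbitrary $f\in C_K(X_g)$ and write it in the normal form \eqref{rep}, namely $f=\sum_{i=1}^{n}r_i1_{D_i}$ with $r_i\in K\setminus\{0\}$ and the $D_i$ pairwise disjoint compact open subsets of $X_g$. Because $K$ is a field, each $r_i$ is invertible, so I can set $h=\sum_{i=1}^{n}r_i^{-1}1_{D_i}\in C_K(X_g)$. Using that the $D_i$ are pairwise disjoint, so that $1_{D_i}1_{D_j}=0$ for $i\neq j$ and $1_{D_i}^{2}=1_{D_i}$, a direct computation gives $fhf=\sum_{i=1}^{n}r_i1_{D_i}=f$. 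Hence every element of $C_K(X_g)$ is von Neumann regular, and so $C_K(X_g)$ is von Neumann regular for every $g\in G$.

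The only genuinely essential ingredient is this last step, and in particular the role of $K$ being a field: it guarantees that the finitely many nonzero scalar values of $f$ can be inverted, which is exactly what produces the regular partner $h$; for a general commutative ring $R$ this would fail. Everything else is formal — transporting the conclusion back along the graded isomorphism of Proposition \ref{propss} and appealing to the elementwise criterion of Lemma \ref{lemmavon}. I would not expect any serious obstacle here; the main care needed is simply to confirm that the standing clopen assumption on the $X_g$ makes $C_K(X_g)$ literally the algebra of compactly supported locally constant functions on $X_g$, so that the representation \eqref{rep} applies verbatim inside $C_K(X_g)$.
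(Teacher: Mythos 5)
Your proposal is correct and follows essentially the same route as the paper's proof: reduce via Proposition \ref{propss} and Lemma \ref{lemmavon} to the von Neumann regularity of each $C_K(X_g)$, and then invert the nonzero values of $f$ pointwise on its support (your $h=\sum_i r_i^{-1}1_{D_i}$ is exactly the paper's $f'$, which is defined by $f'(x)=f(x)^{-1}$ on $\supp(f)$ and $0$ elsewhere). The only cosmetic difference is that the paper writes this inverse pointwise rather than via the normal form \eqref{rep}.
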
 

\begin{proof} For $f\in C_K(X_g)$, define $f'\in C_K(X_g)$ by $f'(x)=f(x)^{-1}$ if $x\in \supp(f)$ and $f'(x)=0$ otherwise.  Note that $f.f'=1|_{\supp(f)}$. Thus $ff'f=f$. This shows that $C_K(X_g)$ is a von Neumann regular ring for each $g\in G$. By Propostion \ref{propss}, we have the graded isomorphism $A_K(\mg_X) \cong_{\gr} C_K(X) \rtimes_\phi G$. The consequence follows directly  by Lemma \ref{lemmavon}. 
\end{proof}

\subsection{Graded ideals of partial skew group rings}\label{subsection32}

Let $X$ be a locally compact Hausdorff topological space which is totally disconnected  and let $G$ be a discrete group. In this subsection, we consider a partial action $\phi=(\phi_g, X_g, X)_{g\in G}$ and the induced partial action $\phi=(\phi_g, C_K(X_g), C_K(X))_{g\in G}$ of $G$ on the algebra $C_K(X)$ and the partial skew group ring $C_K(X)\rtimes_{\phi}G$ associated to it. Throughout this subsection, $K$ is a field and $X_g$'s are open subsets of $X$.  We  prove that there is a one-to-one correspondence between open invariant subsets of $X$ and graded ideals of the partial skew group ring $C_K(X)\rtimes_{\phi}G$.


Recall that an ideal $I$ of a $G$-graded ring $A=\bigoplus_{g\in G} A_g$ is a graded ideal if $I=\bigoplus_{g\in G} I_g$, where $I_g=I\bigcap A_g$.

\begin{lemma} \label{gradedideal} Suppose that $\phi=(\phi_g, X_g, X)_{g\in G}$ is a partial action of a discrete group $G$ on a topological space $X$. Let $V$ be an open invariant subset of $X$. Then $C_K(V)\rtimes_{\phi |V}G$ is a graded ideal of $C_K(X)\rtimes_{\phi}G$.
\end{lemma}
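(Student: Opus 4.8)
The plan is to realise $C_K(V)\rtimes_{\phi|V}G$ as a graded $K$-subspace of $C_K(X)\rtimes_\phi G$ and then verify the two-sided ideal property by a direct support computation on homogeneous generators.

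First I would set up the embedding. Since $V$ is open, each $V_g:=X_g\cap V$ is open in $X$, and a compactly supported locally constant function on $V_g$ extends by zero to an element of $C_K(X_g)\sub C_K(X)$; thus $C_K(V_g)$ is identified with the functions in $C_K(X_g)$ supported in $V_g$. As $G$ is an inverse semigroup with $g\le h$ iff $g=h$, the defining ideal $\NN$ is trivial, so both partial skew group rings coincide with the corresponding algebra $\LL$ and carry honest formal sums with multiplication $(a_s\d_s)(a_t\d_t)=\phi_s(\phi_{s^{-1}}(a_s)a_t)\d_{st}$. Because $\phi|_V$ is the restriction of $\phi$ and $V$ is invariant, the structure maps of $\phi|_V$ agree with those of $\phi$ on functions supported in $V$, so sending $\sum_g b_g\d_g$ (with $b_g\in C_K(V_g)$) to the same expression in $C_K(X)\rtimes_\phi G$ is a $G$-graded $R$-algebra monomorphism whose $g$-component is $C_K(V_g)\d_g\sub C_K(X_g)\d_g$. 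In particular its image equals $\bigoplus_{g\in G}C_K(V_g)\d_g$, a graded $K$-subspace.

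Next I would prove the ideal property. By additivity it suffices to test homogeneous elements: take $a_s\d_s$ with $a_s\in C_K(X_s)$ and $b_t\d_t$ with $b_t\in C_K(V_t)$, and compute $(a_s\d_s)(b_t\d_t)=\phi_s(\phi_{s^{-1}}(a_s)b_t)\d_{st}$. Since $\phi_{s^{-1}}(a_s)\in C_K(X_{s^{-1}})$, the product $\phi_{s^{-1}}(a_s)b_t$ is supported in $X_{s^{-1}}\cap X_t\cap V$. Applying $\phi_s$ and combining axiom (ii), $\phi_s(X_{s^{-1}}\cap X_t)=X_s\cap X_{st}$, with the invariance of $V$ in the form $\phi_s(X_{s^{-1}}\cap V)\sub X_s\cap V$, I get that $\phi_s(\phi_{s^{-1}}(a_s)b_t)$ is supported in $X_{st}\cap V=V_{st}$, hence lies in $C_K(V_{st})\d_{st}$, inside the image. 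The reverse product $(b_t\d_t)(a_s\d_s)=\phi_t(\phi_{t^{-1}}(b_t)a_s)\d_{ts}$ is handled symmetrically, now using $\phi_{t^{-1}}(b_t)\in C_K(V_{t^{-1}})$ (again by invariance) together with the same two identities to confine the support to $V_{ts}$.

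Finally, since the image is simultaneously a graded $K$-subspace and a two-sided ideal, it is a graded ideal; the identification of homogeneous components gives $I\cap(C_K(X)\rtimes_\phi G)_g=C_K(V_g)\d_g$, which is exactly the $g$-component of $I$. I expect the only delicate point to be the support bookkeeping in the third paragraph: the essential mechanism is that the partial-action axiom (ii) controls the $X$-supports while the invariance of $V$ simultaneously confines them to $V$, and it is the conjunction of these two facts that forces each product back into $C_K(V_{st})\d_{st}$. Everything else is formal.
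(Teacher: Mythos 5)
Your proposal is correct and follows essentially the same route as the paper: both verify the two-sided ideal property on homogeneous elements by computing $\phi_g(\phi_{g^{-1}}(a)b)\,\d_{gh}$ and using the invariance of $V$ together with the partial-action axioms to confine the support to $V\cap X_{gh}$, then note that the graded decomposition is immediate. Your additional remarks (extension by zero, triviality of the ideal $\NN$ since the order on $G$ is trivial, and that $\phi|_V$ agrees with $\phi$ on functions supported in $V$) only make explicit what the paper leaves implicit.
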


\begin {proof} For $a_g\in C_K(V\cap X_g)$ and $a_h\in C_K(X_h)$, we have $\phi_g(\phi_{g^{-1}}(a_g)a_h)\in C_K(V\cap X_{gh})$, since $\phi_{g^{-1}}(a_g)a_h\in C_K(V\cap X_{g^{-1}})\cap C_K(V\cap X_{h})$ and $\phi |V$ is a partial action. It follows that $a_g\delta_g\cdot a_h\delta_h=\phi_g(\phi_{g^{-1}}(a_g)a_h)\d_{gh}\in C_K(V)\rtimes_{\phi |V}G$, when $a_g\in C_K(V\cap X_g)$ and $a_h\in C_K(X_g)$. Similarly, we have $a_h\delta_h\cdot a_g\delta_g\in C_K(V)\rtimes_{\phi |V}G$, when $a_g\in C_K(V\cap X_g)$ and $a_h\in C_K(X_g)$. Thus $C_K(V)\rtimes_{\phi |V}G$ is an ideal of $C_K(X)\rtimes_{\phi}G$. It is evident that $C_K(V)\rtimes_{\phi |V}G=\bigoplus_{g\in G}(C_K(V)\rtimes_{\phi |V}G)\bigcap (C_K(X)\rtimes_{\phi}G)_g$. Thus $C_K(V)\rtimes_{\phi |V}G$ is a graded ideal of $C_K(X)\rtimes_{\phi}G$. 
\end{proof}

Suppose that $I$ is an ideal of the partial skew group ring $C_K(X)\rtimes_{\phi}G$. Set \begin{equation}
\label{definv}
V_I=\bigcup_{f\d_{\varepsilon}\in I}\supp (f).
\end{equation} 

\begin{lemma}\label{checkinv} 
Suppose that $\phi=(\phi_g, X_g, X)_{g\in G}$ is a partial action of $G$ on $X$. Let $I$ be an ideal of the partial skew group ring $C_K(X)\rtimes_{\phi}G$. Then the above set $V_I$ is an open invariant subset of $X$. 
\end{lemma}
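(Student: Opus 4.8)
The plan is to show that $V_I$ is open and invariant by analysing the behaviour of the $\varepsilon$-component elements of the ideal $I$ under the partial action. Since $V_I$ is defined as a union of supports $\supp(f)$ over all $f\delta_\varepsilon \in I$, and each such support is open (supports of elements of $C_K(X)$ are compact open), openness of $V_I$ is immediate as a union of open sets.

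For invariance, I would use the characterisation that $V_I$ is invariant if and only if $\phi_{g^{-1}}(X_g \cap V_I) \subseteq X_{g^{-1}} \cap V_I$ for every $g \in G$. First I would verify that the $\varepsilon$-graded part $I_\varepsilon = I \cap (C_K(X)\rtimes_\phi G)_\varepsilon$ consists precisely of elements $f\delta_\varepsilon$ with $\supp(f) \subseteq V_I$; indeed $V_I$ is exactly the union of their supports, and one should check that $I_\varepsilon$ is closed under the operations of $C_K(X)$ so that $\{f : f\delta_\varepsilon \in I\}$ forms an ideal of $C_K(X)$, whose union of supports is $V_I$. The key computation is then the following: take a point $x \in X_g \cap V_I$; there is some $f\delta_\varepsilon \in I$ with $x \in \supp(f)$. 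I would then conjugate $f\delta_\varepsilon$ by a suitable homogeneous element to produce a new element of $I$ whose support in the $\varepsilon$-component witnesses that $\phi_{g^{-1}}(x) \in V_I$.

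Concretely, the plan is to multiply on the left and right by elements of the form $1_D \delta_{g^{-1}}$ and $1_{D'}\delta_g$ (with $D, D'$ appropriate compact open subsets of $X_{g^{-1}}$ and $X_g$) and use the multiplication rule $(a_s\delta_s)(a_t\delta_t) = \phi_s(\phi_{s^{-1}}(a_s)a_t)\delta_{st}$. Since $I$ is an ideal, the product $1_{D}\delta_{g^{-1}} \cdot f\delta_\varepsilon \cdot 1_{D'}\delta_g$ lies in $I$ and lands in the $\varepsilon$-component; computing it yields an element whose support is $\phi_{g^{-1}}$ applied to a neighbourhood of $x$ intersected with the relevant domains. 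Choosing $D, D'$ to contain $\phi_{g^{-1}}(x)$ and $x$ respectively, the resulting function has $\phi_{g^{-1}}(x)$ in its support, and so $\phi_{g^{-1}}(x) \in V_I$, giving invariance.

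The main obstacle I anticipate is the bookkeeping of domains and the support tracking: one must ensure the compact open sets $D, D'$ are chosen inside the correct clopen pieces $X_{g^{-1}}$, $X_g$ so that the conjugation is defined and nonzero near the point of interest, and one must verify that $\phi_{g^{-1}}$ maps the support appropriately under the homeomorphisms $\phi_g$. A subtle point is confirming that the element produced genuinely lands in $I_\varepsilon$ and that its support is nonempty precisely where needed; this relies on $1_D, 1_{D'}$ acting as partial local units for $f$ near $x$, which uses the idempotency and local-unit structure of $C_K(X)$ established earlier in the excerpt (equation \eqref{equationforlocal}). Once the conjugation computation is set up correctly, the conclusion that $\phi_{g^{-1}}(X_g \cap V_I) \subseteq X_{g^{-1}} \cap V_I$ follows directly.
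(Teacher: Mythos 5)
Your proof is correct, and its skeleton is the same as the paper's: openness is immediate from the definition, and invariance is proved by multiplying a witness $f\delta_\varepsilon\in I$ by homogeneous elements of the form $1_D\delta_{g^{-1}}$ and $1_{D'}\delta_g$ and tracking where the support lands. The difference is in the execution, and your version is in fact slightly cleaner. The paper first writes $f=\sum_{i=1}^n r_i 1_{D_i}$ with the $D_i$ pairwise disjoint, picks a compact open $C\subseteq D_1\cap X_g$ containing $x$, and proceeds in two steps: left multiplication gives $1_{\phi_{g^{-1}}(C)}\delta_{g^{-1}}\cdot f\delta_\varepsilon=r_1 1_{\phi_{g^{-1}}(C)}\delta_{g^{-1}}\in I$, and then right multiplication by $r_1^{-1}1_C\delta_g$ produces $1_{\phi_{g^{-1}}(C)}\delta_\varepsilon\in I$; this last step inverts the scalar $r_1$ and is the one place where the hypothesis that $K$ is a field enters. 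Your single sandwich product $1_D\delta_{g^{-1}}\cdot f\delta_\varepsilon\cdot 1_{D'}\delta_g=\phi_{g^{-1}}\bigl(f\cdot 1_{\phi_g(D)\cap D'}\bigr)\delta_\varepsilon$ needs neither the disjoint decomposition of $f$ nor the inverse of any scalar: choosing compact open sets $D\ni\phi_{g^{-1}}(x)$ inside $X_{g^{-1}}$ and $D'\ni x$ inside $X_g$ (these exist since $X$ has a basis of compact open sets and the $X_g$ are open), the resulting function lies in $I_\varepsilon$ and takes the value $f(x)\neq 0$ at $\phi_{g^{-1}}(x)$, so $\phi_{g^{-1}}(x)\in V_I$. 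What this buys is that the lemma holds verbatim over any commutative coefficient ring, not just a field.

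One caveat: your preliminary remark that $I_\varepsilon$ ``consists precisely of elements $f\delta_\varepsilon$ with $\supp(f)\subseteq V_I$'' overstates what is available at this stage. The implication $f\delta_\varepsilon\in I\Rightarrow\supp(f)\subseteq V_I$ is just the definition of $V_I$, but the converse (every $f$ with $\supp(f)\subseteq V_I$ has $f\delta_\varepsilon\in I$) is not automatic; it is the content of the subsequent Lemma~\ref{idealcontained}. Since your invariance argument never uses this converse, the proof stands, but that remark should be dropped or weakened to the one trivial inclusion.
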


\begin{proof}
The set $V_I$ is open, since $\supp(f)$ is open for each $f$. To prove that $V_I$ is invariant, take any element $x\in V_I\cap X_g$ for $g\in G$. We need to show that $\phi_{g^{-1}}(x)\in V_I$. Since $x\in V_I$, there exists $f\in C_K(X)$ such that $f\d_{\varepsilon}\in I$ and $x\in \supp(f)$. We write $f=\sum_{i=1}^nr_i1_{D_i}$, where $D_i$ are compact open mutually disjoint subsets of $X$. We assume that $x\in D_1$. Then there exists a compact open set $C\sub D_1\cap X_g$ such that $x\in C$. We observe that \begin{equation*}
\begin{split}
1_{\phi_{g^{-1}}(C)}\d_{g^{-1}}\cdot f\d_{\varepsilon}
&=\sum_{i=1}^n 1_{\phi_{g^{-1}}(C)}\d_{g^{-1}}\cdot r_i1_{D_i}\d_{\varepsilon}\\
&=\sum_{i=1}^nr_i \phi_{g^{-1}}\big(\phi_g(1_{\phi_{g^{-1}}(C)})1_{D_i}\big)\d_{g^{-1}}\\
&=\sum_{i=1}^nr_i 1_{\phi_{g^{-1}}(C)}\phi_{g^{-1}}(1_{D_i})\d_{g^{-1}}\\
&=r_11_{\phi_{g^{-1}}(C)}\d_{g^{-1}}\in I.
\end{split}
\end{equation*} Then we have $1_{\phi_{g^{-1}}(C)}\d_{\varepsilon}=\phi_{g^{-1}}\big(\phi_g(1_{\phi_{g^{-1}}(C)})1_C\big)\d_{\varepsilon}=r_11_{\phi_{g^{-1}}(C)}\d_{g^{-1}}\cdot r_1^{-1}1_C\d_{g}\in I$. We have $\phi_{g^{-1}}(C)\subseteq V_I$, implying $\phi_{g^{-1}}(x)\in V_I$.
\end{proof}

\begin{lemma} \label{idealcontained} Let $I$ be an ideal of the partial skew group ring $C_K(X)\rtimes_{\phi}G$. Then we have $C_K(V_I)\rtimes_{\phi |V_I}G\sub I$.  Furthermore, $C_K(V_I)\rtimes_{\phi |V_I}G$ is the maximal graded ideal of $C_K(X)\rtimes_{\phi}G$ which is contained in $I$. 
\end{lemma}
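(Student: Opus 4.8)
The plan is to establish Lemma \ref{idealcontained} in two parts: first the containment $C_K(V_I)\rtimes_{\phi|V_I}G\sub I$, and then the maximality among graded ideals contained in $I$. For the containment, I would argue on homogeneous generators. A typical homogeneous element of $C_K(V_I)\rtimes_{\phi|V_I}G$ has the form $a_g\d_g$ with $a_g\in C_K(V_I\cap X_g)$, and by \eqref{rep} it suffices to treat $a_g=1_D$ for a compact open set $D\sub V_I\cap X_g$. The key observation is that $1_D\d_g = (1_D\d_g)(1_{X_{g^{-1}}}\d_\varepsilon)$-type manipulations reduce matters to showing $1_D\d_\varepsilon\in I$ for every compact open $D\sub V_I$, since once $1_D\d_\varepsilon\in I$ and $D\sub X_g$ we can multiply on the left by $1_{\phi_{g^{-1}}(D)}\d_{g}$ (or an appropriate homogeneous unit) to land $a_g\d_g$ inside the ideal $I$.

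The heart of the argument is therefore the claim that $D\sub V_I$ compact open implies $1_D\d_\varepsilon\in I$. By the definition \eqref{definv} of $V_I$, each point of $D$ lies in $\supp(f)$ for some $f$ with $f\d_\varepsilon\in I$; writing $f=\sum_i r_i 1_{D_i}$ as in \eqref{rep}, I would multiply $f\d_\varepsilon$ by a suitable $1_{D'}\d_\varepsilon$ (with $D'$ a compact open set on which $f$ is a fixed nonzero constant $r_i$) and then scale by $r_i^{-1}\in K$, using that $K$ is a field, to produce $1_{D'}\d_\varepsilon\in I$. Covering the compact set $D$ by finitely many such $D'$ and using the fact that characteristic functions of compact open sets can be combined via finite unions and differences inside the idempotent ring $C_K(X)$, I obtain $1_D\d_\varepsilon\in I$. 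This step — passing from the pointwise membership encoded in $V_I$ to the membership of a single characteristic function over the whole of $D$ via a compactness/finite-cover argument and field inverses — is where the field hypothesis and local-unit structure are genuinely used, and I expect it to be the main obstacle to write cleanly.

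For the second assertion, that $C_K(V_I)\rtimes_{\phi|V_I}G$ is the \emph{maximal} graded ideal contained in $I$, I would take an arbitrary graded ideal $J\sub I$ and show $J\sub C_K(V_I)\rtimes_{\phi|V_I}G$. Since $J$ is graded, it is spanned by its homogeneous components $J_g$, and by Lemma \ref{gradedideal} it suffices to show that the degree-$\varepsilon$ part $J_\varepsilon$ sits inside $C_K(V_I)$ at the level of supports, i.e.\ that $\bigcup_{f\d_\varepsilon\in J}\supp(f)\sub V_I$; this inclusion is immediate from $J\sub I$ and the definition of $V_I$. The remaining point is to control the nonidentity homogeneous components: for $a_g\d_g\in J_g$ with $a_g\in C_K(X_g)$, graded-ideal membership gives $(a_g\d_g)(a_g^*\d_{g^{-1}})\in J_\varepsilon$ where $a_g^*$ is the pointwise inverse on $\supp(a_g)$, which forces $\supp(a_g)\sub V_I$ after transporting along $\phi_g$, and hence $a_g\in C_K(V_I\cap X_g)$. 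Thus every homogeneous piece of $J$ lies in $C_K(V_I)\rtimes_{\phi|V_I}G$, giving maximality. I would rely throughout on Lemma \ref{checkinv} (that $V_I$ is open invariant, so $\phi|V_I$ is genuinely a partial action) and on Lemma \ref{gradedideal} (that the candidate is indeed a graded ideal), so that only the two inclusions above remain to be verified.
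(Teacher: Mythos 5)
Your proposal is correct and follows essentially the same route as the paper's proof: the core is the same compactness/finite-cover argument with disjoint decompositions, field inverses, and inclusion--exclusion to show $1_D\d_\varepsilon\in I$ for every compact open $D\sub V_I$, followed by multiplication by a homogeneous element to reach degree $g$, and maximality is obtained exactly as you describe, by pushing each homogeneous component of a graded ideal $J\sub I$ into degree $\varepsilon$ and reading off supports. The only blemishes are notational: the multipliers you write, $1_{\phi_{g^{-1}}(D)}\d_g$ and $a_g^*\d_{g^{-1}}$, are ill-formed as their coefficients lie in $C_K(X_{g^{-1}})$ and $C_K(X_g)$ rather than in $C_K(X_g)$ and $C_K(X_{g^{-1}})$ as degrees $g$ and $g^{-1}$ require; the paper's \eqref{eq} instead uses the right multiplier $1_{\supp(f_g)}\d_g$, and your degree-$g^{-1}$ multiplier should be $\phi_{g^{-1}}(a_g^*)\d_{g^{-1}}$, after which the computations go through verbatim.
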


\begin{proof} By Lemma \ref{gradedideal}, $C_K(V_I)\rtimes_{\phi |V_I}G$ is a graded ideal of $C_K(X)\rtimes_{\phi}G$. To prove that $C_K(V_I)\rtimes_{\phi |V_I}G\sub I$, it suffices to prove that $f_g\d_g\in I$ when $f_g\in C_K(V_I\cap X_g)$ for all $g\in G$. We first prove that $1_B\d_{\varepsilon}\in I$ for any compact open set $B\sub V_I$. Take any element $x\in B$. There exists $a_{\varepsilon}\in C_K(X)$ such that  $a_{\varepsilon}\d_{\varepsilon}\in I$ and $x\in \supp (a_{\varepsilon})$. Write $a_{\varepsilon}=\sum_{k=1}^l b_k1_{C_k}$, where $C_k$ are mutually disjoint compact open sets. The element $x$ belongs to one set among $C_1, \cdots, C_l$, which is denoted by $C_x$. Then we have $B\sub\cup_{x\in B}C_x=\cup_{s=1}^mC_{x_s}$, since $B$ is compact. It follows that $B=\bigcup_{s=1}^m (B\cap C_{x_s})$. Set $B_s=B\cap C_{x_s}$. By inclusion-exclusion principle, we have 
\begin{equation}
\label{inclusion1}
1_{\cup_{s=1}^mB_{s}}=\sum_{1\leq k\leq m}(-1)^{k+1}\sum_{\{i_1, \cdots, i_k\}\sub\{1, \cdots, m\}}1_{B_{i_1}\cap\cdots\cap B_{i_k}}.
\end{equation}
We observe that $1_{C_{x_s}}\d_{\varepsilon}\in I$ for all $s$, since $a_{\varepsilon}\d_{\varepsilon}\in I$. So $1_{B_s}\d_{\varepsilon}=1_B\d_{\varepsilon}\cdot 1_{C_{x_s}}\d_{\varepsilon}\in I$ for each $s$. By \eqref{inclusion1}, $1_B\d_{\varepsilon}=1_{\cup_{s=1}^mB_{s}}\d_{\varepsilon}\in I$, since $1_{B_{i_1}\cap\cdots\cap B_{i_k}}\d_{\varepsilon}\in I$ for any $1\leq k\leq m$ and $\{i_1,\cdots, i_k\}\sub \{1,\cdots, m\}$. For any $f\in C_K(V_I)$, we write $f=\sum_{j=1}^lk_j1_{D_j}$ with the $D_j$'s disjoint compact open subsets of $V_I$. Then $f\d_{\varepsilon}\in I$. Next for any $g\neq \varepsilon$ and $f_g\in C_K(V_I\cap X_g)$, we have  
\begin{equation}\label{eq}
f_g\d_g=f_g\d_{\varepsilon}\cdot 1_{\supp(f_g)}\d_g\in IA\sub I.
\end{equation}

It remains to prove $J\sub C_K(V_I)\rtimes_{\phi |V_I}G$ for any graded ideal $J$ of $C_K(X)\rtimes_{\phi}G$ which is contained in $I$. For any $1_B\d_{\varepsilon}\in J_{\v}$ with $B$ a compact open subset of $X$, we have $1_B\d_{\v}\in I$ and thus $B\sub V_I$ by the definition of $V_I$ given in \eqref{definv}. It follows that $1_B\d_{\varepsilon}\in (C_K(V_I)\rtimes_{\phi |V_I}G)_{\v}$ and $f\d_{\v}\in (C_K(V_I)\rtimes_{\phi |V_I}G)_{\v}$ for any $f\d_{\v}\in J_{\v}$. Similarly as \eqref{eq}, we have $f_g\d_g\in (C_K(V_I)\rtimes_{\phi |V_I}G)_{g}$ for any $g\in G$. So $J\sub C_K(V_I)\rtimes_{\phi |V_I}G$.
\end{proof}

\begin{prop}\label{gradedideals} Let $X$ be a topological space that has a basis of compact open sets, $G$ a discrete group, $\phi=(\phi_g, X_g, X)_{g\in G}$ a partial action of $G$ on $X$. Suppose that $K$ is a field. Then there is a one-to-one correspondence $$V\longmapsto C_K(V)\rtimes_{\phi |V}G$$ between the open invariant subsets of $X$ and the graded ideals of $C_K(X)\rtimes_{\phi}G$.
\end{prop}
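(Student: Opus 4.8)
The plan is to establish a bijection between the two collections by exhibiting explicit maps in both directions and showing they are mutually inverse. In one direction, Lemma \ref{gradedideal} already gives the assignment $V \mapsto C_K(V)\rtimes_{\phi|V}G$ from open invariant subsets to graded ideals. For the reverse direction, given a graded ideal $I$, I would assign to it the open invariant subset $V_I = \bigcup_{f\d_\varepsilon \in I}\supp(f)$ introduced in \eqref{definv}; Lemma \ref{checkinv} guarantees that $V_I$ is indeed open and invariant. So both maps are well defined, and it remains only to verify that they are mutually inverse.

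First I would show that $V_{C_K(V)\rtimes_{\phi|V}G} = V$ for every open invariant subset $V$. The inclusion $\supseteq$ is clear because for any $x \in V$, since $X$ has a basis of compact open sets there is a compact open $B \subseteq V$ with $x \in B$, and then $1_B\d_\varepsilon$ lies in $(C_K(V)\rtimes_{\phi|V}G)_\varepsilon$, so $x \in \supp(1_B) \subseteq V_{C_K(V)\rtimes_{\phi|V}G}$. Conversely, any $f\d_\varepsilon$ in the ideal $C_K(V)\rtimes_{\phi|V}G$ has $f \in C_K(V)$, hence $\supp(f) \subseteq V$, giving the inclusion $\subseteq$. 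This direction is essentially a support computation and poses no real difficulty.

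The substantive direction is $C_K(V_I)\rtimes_{\phi|V_I}G = I$ for every graded ideal $I$. The inclusion $C_K(V_I)\rtimes_{\phi|V_I}G \subseteq I$ is exactly the content of the first part of Lemma \ref{idealcontained}. For the reverse inclusion $I \subseteq C_K(V_I)\rtimes_{\phi|V_I}G$, I would use that $I$ is \emph{graded}: it suffices to show $I_g \subseteq (C_K(V_I)\rtimes_{\phi|V_I}G)_g$ for each $g$. Here the hypothesis that $I$ is graded is crucial and is where the correspondence can fail for non-graded ideals. For a homogeneous element $f_g\d_g \in I_g$ with $f_g \in C_K(X_g)$, I would multiply on the left by an appropriate homogeneous element of degree $g^{-1}$ (as in the computation \eqref{eq} and the manipulations in Lemma \ref{checkinv}) to produce, from $f_g\d_g \in I$, an element $h\d_\varepsilon \in I$ with $\supp(h) = \supp(f_g)$; this forces $\supp(f_g) \subseteq V_I$ by the definition of $V_I$, whence $f_g \in C_K(V_I \cap X_g)$ and $f_g\d_g \in C_K(V_I)\rtimes_{\phi|V_I}G$.

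The main obstacle is the step just described, namely extracting from a homogeneous element $f_g\d_g \in I$ of arbitrary degree $g$ enough information about $\supp(f_g)$ to conclude it lies in $V_I$, given that $V_I$ is defined purely in terms of the $\varepsilon$-component of $I$. The key technical device is to transport the support of $f_g$ into the $\varepsilon$-component by a suitable multiplication, as is done explicitly in the proof of Lemma \ref{checkinv} via conjugation-type computations $1_{\phi_{g^{-1}}(C)}\d_{g^{-1}}\cdot f\d_\varepsilon$ and the identity $1_{\phi_{g^{-1}}(C)}\d_\varepsilon = r_1 1_{\phi_{g^{-1}}(C)}\d_{g^{-1}}\cdot r_1^{-1}1_C\d_g$. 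Once one confirms that these support-transport computations interact correctly with the grading, the two inclusions combine via the second ("maximality") part of Lemma \ref{idealcontained} to give $I = C_K(V_I)\rtimes_{\phi|V_I}G$ when $I$ itself is graded, completing the proof that the two assignments are mutually inverse.
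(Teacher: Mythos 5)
Your proposal is correct and follows essentially the same route as the paper: well-definedness of $V \mapsto C_K(V)\rtimes_{\phi|V}G$ from Lemma \ref{gradedideal}, the assignment $I \mapsto V_I$ from Lemma \ref{checkinv}, and the identity $I = C_K(V_I)\rtimes_{\phi|V_I}G$ obtained from the inclusion and maximality statements of Lemma \ref{idealcontained} applied to the graded ideal $I$ itself. The only cosmetic difference is that you establish injectivity by verifying $V_{C_K(V)\rtimes_{\phi|V}G}=V$, whereas the paper compares the $\varepsilon$-components $C_K(V)=C_K(V')$ of two equal ideals and then uses the compact open basis of $X$ --- the same computation in different packaging.
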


\begin{proof} Let $\mathcal{O}$ be the set of open invariant subsets of $X$ and $\mathcal{I}$ the set of graded ideals of $C_K(X)\rtimes_{\phi}G$. We define a map $\Phi:\O\xra \I$ by $\Phi(V)=C_K(V)\rtimes_{\phi |V}G$ for $V\in \O$. By Lemma \ref{gradedideal} the map $\Phi$ is well defined. 

We prove that $\Phi$ is injective. For $V, V'\in\O$, if $C_K(V)\rtimes_{\phi |V}G=C_K(V')\rtimes_{\phi |V'}G$, then we have $C_K(V)=C_K(V')$, since $C_K(V)$ and $C_K(V')$ are the $\varepsilon$-th components of the graded ideals $C_K(V)\rtimes_{\phi |V}G$ and $C_K(V')\rtimes_{\phi |V'}G$. Since $V$ is open, we have $V=\cup_{i} B_i$ with $B_i$ compact open sets of $X$. For any $B_i$, we have $1_{B_i}\in C_K(V)=C_K(V')$. Then we have $B_i\subseteq V'$ for any $i$. Thus $V\subseteq V'$. Similarly, we have $V'\subseteq V$. Hence, $V=V'$.

It remains to prove that $\Phi$ is surjective. Take $I\in\I$. By Lemma \ref{checkinv}, we have an open invariant subset $V_I\sub X$. We prove that $I=C_K(V_I)\rtimes_{\phi |V_I}G$. By Lemma \ref{idealcontained} we have $C_K(V_I)\rtimes_{\phi |V_I}G\subseteq I$ and $C_K(V_I)\rtimes_{\phi |V_I}G$ is the maximal graded ideal of $C_K(X)\rtimes_{\phi}G$ which is contained in $I$. Since $I$ is a graded ideal of $C_K(X)\rtimes_{\phi}G$, we have $I=C_K(V_I)\rtimes_{\phi |V_I}G=\Phi(V_I)$.
\end{proof}

Let $\mg $ be a locally compact Hausdorff $\acute{\rm e}$tale groupoid. 
Recall that the \emph{isotropy group} at a unit $u$ of a groupoid $\mg$ is the group  $\Iso(u) =\{\g\in\mg \mid
d(\g)=r(\g)=u\}.$ The groupoid $\mg$ is called \emph{effective} if the interior of  
$\Iso(\mg)$ is $\mg^{(0)}$, where $\Iso(\mg)=\bigsqcup_{u\in\mg^{(0)}}\Iso(u)$. The groupoid $\mg$ is effective if and only if  for any nonempty open bisection $B$ with $B \cap \mg^{(0)} = \emptyset$, we have $B \not \subseteq \Iso(\mg)$ (see \cite[Lemma~3.1]{bcfs}). A groupoid $\mg$ is \emph{strongly effective} if for every nonempty closed
invariant subset $D$ of $\mg^{(0)}$, the groupoid $\mg|_{D}=d^{-1}(D)$ is effective.

Let $\mg$ be a $G$-graded ample Hausdorff groupoid  such that $\mg_\varepsilon$ is strongly effective. It was proved in \cite[Theorem 5.3]{cep} that the correspondence
$$U\longmapsto A_K(\mg|_U)$$
is an isomorphism from the lattice of open invariant subsets of $\mg^{(0)}$ to the lattice of graded ideals in $A_K (\mg)$.

Given a partial action $\phi=(\phi_g, X_g, X)_{g\in G}$ of $G$ on $X$, we have a $G$-graded groupoid $\mg_X=\bigcup_{g\in G}g\times X_g$ given in \eqref{groupoid}.  Observe that the groupoid $(\mg_X)_{\varepsilon}=\varepsilon\times X$ is strongly effective, because there is no open bisection $B$ satisfying $B\cap (\mg_X)_{\varepsilon}^{(0)}=\emptyset$.

For any open invariant subset $V$ of $X$, recall that $\mg_V=\bigcup_{g\in G}g\times V_g$ is a subgroupoid of $\mg_X$ where $V_g=V\cap X_g$.  

\begin{cor} Let $X$ be a locally compact Hausdorff topological space which is totally disconnected. Suppose that $\phi=(\phi_g, X_g, X)_{g\in G}$ is a partial action of a discrete group $G$ on $X$ where $X_g$'s are clopen subsets of $X$. Then there is a one-to-one correspondence $$V\longmapsto A_K(\mg_V)$$ between the open invariant subsets of $X$ and the graded ideals of the Steinberg algebra $A_K(\mg_X)$.
\end{cor}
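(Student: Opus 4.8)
The plan is to apply the lattice isomorphism of \cite[Theorem 5.3]{cep} to the groupoid $\mg_X$ and then transport it along the identification of $\mg_X^{(0)}$ with $X$. First I would verify that the hypotheses of that theorem hold for $\mg_X$: it is a $G$-graded ample Hausdorff groupoid, as established earlier in this section; the canonical cocycle $c\colon\mg_X\xra G$, $(g,x)\mapsto g$, satisfies $c^{-1}(\varepsilon)=(\mg_X)_\varepsilon=\varepsilon\times X$; and this $\varepsilon$-component is strongly effective by the observation made just before the statement, since there is no nonempty open bisection $B$ of $\varepsilon\times X$ with $B\cap(\mg_X)_\varepsilon^{(0)}=\emptyset$. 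As $K$ is a field, \cite[Theorem 5.3]{cep} then supplies a lattice isomorphism $U\mapsto A_K(\mg_X|_U)$ from the open invariant subsets of $\mg_X^{(0)}$ onto the graded ideals of $A_K(\mg_X)$.

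Next I would precompose this with the bijection $V\mapsto\varepsilon\times V$ between the open invariant subsets of $X$ and the open invariant subsets of $\mg_X^{(0)}$, which was recorded earlier (under the identification $\mg_X^{(0)}\cong X$ via $(\varepsilon,x)\mapsto x$, invariance of $V$ for the partial action $\phi$ and invariance of $\varepsilon\times V$ for the groupoid $\mg_X$ are the same condition). The only remaining point is the identity $\mg_X|_{\varepsilon\times V}=\mg_V$. Unwinding the definitions, $\mg_X|_{\varepsilon\times V}=d^{-1}(\varepsilon\times V)$ consists of the pairs $(g,x)$ with $x\in X_g$ and $\phi_{g^{-1}}(x)\in V$, while $\mg_V=\bigcup_{g\in G}g\times(V\cap X_g)$ consists of the pairs $(g,x)$ with $x\in V\cap X_g$. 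The invariance inclusions $\phi_{g^{-1}}(V\cap X_g)\sub V\cap X_{g^{-1}}$ and $\phi_g(V\cap X_{g^{-1}})\sub V\cap X_g$ are exactly what is needed to match the two descriptions. Composing the two bijections then yields the asserted correspondence $V\mapsto A_K(\mg_V)$.

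I expect the argument to be bookkeeping rather than a genuine obstacle, since every substantive ingredient is already in hand: ampleness and Hausdorffness of $\mg_X$, strong effectiveness of $(\mg_X)_\varepsilon$, and the external input \cite[Theorem 5.3]{cep}. The one step that deserves care is the identity $\mg_X|_{\varepsilon\times V}=\mg_V$, where one must invoke invariance of $V$ in both directions rather than mere set-theoretic containment. As an independent check, and an alternative route avoiding \cite[Theorem 5.3]{cep}, one could instead combine the graded isomorphism $A_K(\mg_X)\cong_{\gr}C_K(X)\rtimes_\phi G$ of Proposition~\ref{propss} with the one-to-one correspondence of Proposition~\ref{gradedideals} between the open invariant subsets of $X$ and the graded ideals of $C_K(X)\rtimes_\phi G$, and check that $A_K(\mg_V)$ is matched with $C_K(V)\rtimes_{\phi|_V}G$ under this isomorphism; this recovers the same statement.
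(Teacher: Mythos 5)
Your argument is correct, but your primary route differs from the paper's own proof, which is exactly the ``alternative route'' you sketch in your last sentence: the paper proves the corollary in two lines by combining Proposition~\ref{propss} (the graded isomorphism $A_K(\mg_V)\cong_{\gr}C_K(V)\rtimes_{\phi|V}G$, applied with $V$ as the ambient space, noting each $X_g\cap V$ is clopen in $V$) with Proposition~\ref{gradedideals} (the bijection $V\mapsto C_K(V)\rtimes_{\phi|V}G$ between open invariant subsets of $X$ and graded ideals of $C_K(X)\rtimes_{\phi}G$). Your main route instead applies \cite[Theorem 5.3]{cep} directly to the groupoid $\mg_X$: you verify its hypotheses (ampleness, Hausdorffness, and strong effectiveness of $(\mg_X)_\varepsilon$, the last being trivial since $\varepsilon\times X$ consists only of units), then transport the resulting lattice isomorphism along the bijection $V\mapsto\varepsilon\times V$, checking $\mg_X|_{\varepsilon\times V}=d^{-1}(\varepsilon\times V)=\mg_V$ via the invariance inclusions in both directions --- which is indeed the only step requiring care, and you handle it properly. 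Interestingly, the paper sets up all the ingredients of your route in the paragraphs immediately preceding the corollary (it recalls \cite[Theorem 5.3]{cep} and observes strong effectiveness of $(\mg_X)_\varepsilon$), yet its formal proof does not use them. As for what each approach buys: yours bypasses the machinery of Subsection~\ref{subsection32} (Lemmas \ref{gradedideal}, \ref{checkinv}, \ref{idealcontained} and Proposition~\ref{gradedideals}) at the cost of leaning on the external result \cite[Theorem 5.3]{cep}, and it has the structural advantage of working intrinsically with ideals of $A_K(\mg_X)$, so no compatibility between the abstract isomorphisms of Proposition~\ref{propss} for different invariant subsets needs to be traced; the paper's route is self-contained and reflects its central theme, namely that graded-ideal structure of Steinberg algebras can be derived from the partial skew group ring side rather than imported from groupoid results.
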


\begin{proof} By Proposition \ref{propss} we have $A_K(\mg_V)\cong_{\rm gr} C_K(V)\rtimes_{\phi |V}G$. By Proposition \ref{gradedideals}, the statement follows immediately.
\end{proof}

\subsection{A partial group ring} \label{subsection33}

Given a group $G$, Exel constructed a $C^*$-algebra, called partial group $C^*$-algebra of $G$ \cite[Definition 6.4]{exel1998}. In this section, we define the algebraic version of the construction and realise it as partial action of $G$ on an appropriate set. 


Let $G$ be an abelian group, $\v$ be its identity and $R$ a commutative ring with unit. Consider the free $R$-algebra $A$ generated by symbols $P_E$ with $E$ a finite subset of $G$, subject to relations $$P_EP_F=P_{E\cup F}$$ for all possible choices of $E$ and $F$. Note that $A$ is a commutative $R$-algebra, $P_EP_E=P_E$ and $P_{\emptyset}$ is the identity of $A$, where $\emptyset$ is the empty set. 

Let $A_{\v}$ be the ideal of $A$ generated by $P_{\{\v\}}$, that is, $A_{\v}=P_{\{\v\}}A$. Then $A_{\v}$ consists of the sum of $P_E$ such that $E$ is a finite subset of $G$ and $\v\in E$.  

Define $D_g={\rm span}_R\{P_E\;|\; \v, g\in E\}$ for $g\in G$. Then $D_g$ is an ideal of $A_\v$. Define $$\a_g: D_{g^{-1}}\longrightarrow D_g$$ by $\a_g(P_E)=P_{gE}$ which is an isomorphism of ideals of $A_\v$. We have $\a=(\a_g, D_g, A_\v)_{g\in G}$ is a partial action of $G$ on algebra $A_\v$. We call $$P(G)=A_\v\rtimes_{\a}G$$ the \emph{partial group ring} of $G$. 

Next we realise $P(G)$ as a partial skew group ring of the form $C_R(Y)\rtimes_{\phi}G$, where $\phi$ is a partial action of $G$ on a topological space $Y$. Consider $$Y=\prod_{g\in G}\{0, 1\}.$$ Endowed with the product topology, $Y$ is a compact Hausdorff space. Each element $x\in Y$ is considered as $\{x_g\}_{g\in G}$ with $x_g\in\{0,1\}$. Set $Y_\v$ to be the subset of $Y$ consisting of $\{x_g\}_{g\in G}$ with $x_\v=1$ and $$Y_g=\{x\in Y_\v\;|\; x_\v=1=x_g\}\sub Y_\v.$$  Here, $Y_\v$ is a subspace topology. Note that $Y_g$ is an open subset of $Y_\v$. Define $\phi_g: Y_{g^{-1}}\xra Y_g$ given by $$\phi_g(\{x_h\}_{h\in G})=\{x_{g^{-1}h}\}_{h\in G}.$$ Observe that $\phi_g$ is a homeomorphism. We can directly check that conditions (i) and (ii) of partial action of a discrete  group $G$ on $Y_\v$ hold. The condition (iii) of $G$ on $Y_\v$ holds since $G$ is an abelian group. So $\phi=(\phi_g, Y_g, Y_\v)_{g\in G}$ is a partial action of $G$ on the topological space $Y_\v$. 

The partial action $\phi=(\phi_g, Y_g, Y_\v)_{g\in G}$ of $G$ on $Y_\v$ induces a partial action $\phi=(\phi_g, C_R(Y_g), C_R(Y_\v))_{g\in G}$ of $G$ on $C_R(Y_{\v})$.

For each $g\in G$, let $$Q_g: Y\longrightarrow R$$ be the $g$-th coordinate function, that is, $Q_g(\{x_h\}_{h\in G})=x_g\in R$. Then $Q_g\in C_R(Y)$, since $\supp(Q_g)=\{x\in Y\;|\; x_g=1\}$ is an open compact subset of $Y$. For a finite set $E\sub G$, let $Q_E=\prod_{g\in E}Q_g$. Since $Q_EQ_F=Q_{E\cup F}$, we have a homomorphism of algebras $$\Psi: A\longrightarrow C_R(Y)$$ such that $\Psi(P_E)=Q_E$. When $E=\emptyset$, $Q_{\emptyset}$ is the identity map of $Y$.

Parallel to \cite[Proposition 6.6]{exel1998}, we prove that $\Psi: A\longrightarrow C_R(Y)$ is an isomorphism of algebras. 

\begin{prop} Let $G$ be a group. Then $\Psi: A\longrightarrow C_R(Y)$ is an isomorphism of algebras which restricts to $D_g\cong C_R(Y_g)$ for $g\in G$.
\end{prop}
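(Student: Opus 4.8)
The plan is to identify the images of the generators as characteristic functions of cylinder sets, prove surjectivity and injectivity separately, and then deduce the restriction statement from the already-established isomorphism.

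First I would observe that for a finite set $E\sub G$ the function $Q_E=\prod_{g\in E}Q_g$ is exactly the characteristic function $1_{Z_E}$ of the compact open cylinder $Z_E=\{x\in Y\mid x_g=1\text{ for all }g\in E\}$, because each coordinate $x_g\in\{0,1\}$ forces the product to equal $1$ precisely when every coordinate indexed by $E$ is $1$. In particular $Z_\emptyset=Y$, so $\Psi(P_\emptyset)=1_Y$ is the identity, and $Z_E\cap Z_F=Z_{E\cup F}$ recovers the relation $Q_EQ_F=Q_{E\cup F}$; this confirms that $\Psi$ is a well-defined unital homomorphism of $R$-algebras carrying $P_E$ to $1_{Z_E}$.

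For surjectivity, note that $Y=\prod_{g\in G}\{0,1\}$ is compact Hausdorff and totally disconnected, with the basic cylinders $\{x:x_{g_i}=a_i,\ 1\le i\le n\}$ (for $a_i\in\{0,1\}$) forming a basis of clopen sets closed under finite intersection. Hence every compact open subset of $Y$ is a finite disjoint union of basic cylinders, so by \eqref{rep} the module $C_R(Y)$ is spanned by the characteristic functions of basic cylinders. The characteristic function of $\{x:x_{g_i}=a_i\}$ equals $\prod_{a_i=1}Q_{g_i}\prod_{a_j=0}(1-Q_{g_j})$, and since $1-Q_g=\Psi(P_\emptyset-P_{\{g\}})$ while $Q_g=\Psi(P_{\{g\}})$, expanding this product writes it as an $R$-linear combination of the $Q_F$, hence it lies in $\operatorname{Im}\Psi$. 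Therefore $\Psi$ is surjective.

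The main work is injectivity, and I expect this to be the crux. As the monoid algebra of the monoid of finite subsets of $G$ under union, $A$ has the family $\{P_E\}$ as an $R$-basis, so it suffices to show that the images $\{1_{Z_E}\}$ are $R$-linearly independent in $C_R(Y)$. For a finite set $S\sub G$ let $x^S\in Y$ be the point with $x^S_g=1$ if $g\in S$ and $x^S_g=0$ otherwise; then $Q_E(x^S)=1$ if $E\sub S$ and $0$ otherwise. Given a relation $\sum_i r_iQ_{E_i}=0$ with the $E_i$ distinct, evaluating at $x^S$ gives $\sum_i r_i[E_i\sub S]=0$ for every finite $S$. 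Choosing $S=E_j$ with $E_j$ minimal with respect to inclusion among the $E_i$ annihilates every term except $i=j$, forcing $r_j=0$; iterating over the minimal sets shows all $r_i=0$. Thus $\Psi$ is injective, and together with surjectivity it is an isomorphism of $R$-algebras.

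Finally, for the restriction to $D_g$, I would note that $D_g=P_{\{\v,g\}}A$, since $P_{\{\v,g\}}P_F=P_{\{\v,g\}\cup F}$ ranges over exactly the $P_E$ with $\v,g\in E$. Applying the established surjectivity of $\Psi$ yields $\Psi(D_g)=Q_{\{\v,g\}}\,C_R(Y)=1_{Y_g}C_R(Y)$, which is precisely the ideal of functions in $C_R(Y)$ supported in the clopen set $Y_g=Z_{\{\v,g\}}$, i.e. $C_R(Y_g)$ under the usual identification. Since $\Psi$ is injective, its restriction $\Psi|_{D_g}\colon D_g\to C_R(Y_g)$ is an isomorphism, as required.
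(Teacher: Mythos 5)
Your proof is correct, and its overall skeleton matches the paper's: surjectivity by decomposing compact open sets into cylinders, injectivity by evaluating at suitable points of $Y$, and the restriction statement deduced from the global isomorphism. The differences are in execution, and they are worth recording because your version is cleaner exactly where the paper is clumsiest or incomplete. For injectivity, the paper chooses $E_1$ of least cardinality and then iteratively selects elements $g_l\in E_j\setminus E_1$, grouping the remaining sets so as to manufacture a point at which only $Q_{E_1}$ survives; your observation that evaluating at the indicator point of a finite set $S$ sends $Q_E$ to $1$ precisely when $E\subseteq S$, applied with $S=E_j$ for an inclusion-minimal $E_j$, extracts the coefficient in one line and eliminates all that bookkeeping. (Your monoid-algebra basis remark is standard and correct, but not even needed: exactly as in the paper, it suffices that the $P_E$ span $A$, since your evaluation argument then shows every coefficient in a kernel element vanishes.) For surjectivity the two arguments coincide in substance: expanding $\prod_{a_j=0}(1-Q_{g_j})$ is precisely the paper's inclusion-exclusion identity $1_{\prod_{g\in G} Z_g}=Q_E+\sum_{D\subseteq F,\,D\neq\emptyset}(-1)^{|D|}Q_{D\cup E}$, which the paper verifies pointwise instead. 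For the restriction, the paper merely asserts that the preimage of $1_B$, for $B\subseteq Y_g$ compact open, lies in $D_g$; your argument via $D_g=P_{\{\varepsilon,g\}}A$, hence $\Psi(D_g)=Q_{\{\varepsilon,g\}}\,C_R(Y)=1_{Y_g}C_R(Y)=C_R(Y_g)$, combined with injectivity of $\Psi$, supplies exactly the justification the paper omits.
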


\begin{proof}  We first prove injectivity of $\Psi$. Suppose that there exists $a=\sum_{i=1}^n k_iP_{E_i}\in A$ with $k_i\in R$ and $E_i$ pairwise distinct finite subsets of $G$ such that $\Psi(a)=\sum_{i=1}^nk_i Q_{E_i}=0$. If $n=1$, then it is obvious that $k_1=0$, implying $a=0$. We may assume that $n\geq 2$. We assume that $E_1$ is the one among $E_1, \cdots, E_n$ which has least number of elements. Denote by $|E_i|$ the cardinality of $E_i$ (possibly $|E_1|=|E_j|$ for some $j\neq 1$ ). Set $X_2=\{E_2, \cdots, E_n\}$. Pick $E_{i_2}\in X_2$. There exists $g_2\in E_{i_2}$ but $g_2\notin E_1$. Otherwise, we have $E_{i_2}\sub E_1$ and $|E_{i_2}|\geq |E_1|$, so $E_{i_2}=E_1$, contradicting to the fact that $E_1$ and $E_{i_2}$ are distinct. We list all the sets $E^2_1, \cdots, E^2_{n_2}$ among $E_2,\cdots, E_n$ which contains $g_2$ for some positive integer $n_2$. Suppose that for $k\geq 2$ we have distinct elements $g_2,\cdots, g_k\notin E_1$, $X_l=X_{l-1}\setminus{\{E^{l-1}_1,\cdots, E^{l-1}_{n_{l-1}}\}}$ and that $E^l_1,\cdots, E^l_{n_l}\in  X_l$ are the list of all sets in $X_l$ containing $g_l$ for $2\leq l\leq k$. Here, $X_1=\{E_1, \cdots, E_n\}$, $n_1=1$ and $E_1=E_1^1$. Set $X_{k+1}=X_k\setminus \{E^k_1, \cdots, E^k_{n_k}\}$. If $X_{k+1}=\emptyset$, we stop here. Otherwise, pick $E_{i_{k+1}}\in X_{k+1}$. Similarly, there exists  $g_{k+1}\in E_{i_{k+1}}\setminus E_1$. Observe that $g_{k+1}\neq g_l$ for $2\leq l\leq k$. We list all the sets $E^{k+1}_1, \cdots, E^{k+1}_{n_{k+1}}$ for some positive integer which contain $g_{k+1}$. There exists a number $2\leq t\leq n$ such that $X_{t+1}=\emptyset$. We have distinct elements $g_2,\cdots, g_t\notin E_1$, $X_l=X_{l-1}\setminus{\{E^{l-1}_1,\cdots, E^{l-1}_{n_{l-1}}\}}$ and that $E^l_1,\cdots, E^l_{n_l}\in  X_l$ are the list of all sets in $X_l$ containing $g_l$ for $2\leq l\leq t$.

Now we take $x=\{x_g\}_{g\in G}\in Y$ such that $x_g=1$ for all $g\in E_1$, $x_{g_l}=0$ for $2\leq l\leq t$. Observe that $Q_{E_1}(x)=1$ and $Q_{E_j}(x)=0$ for all $j\neq 1$. It follows that $\Psi(a)=k_1=0$. Thus $a=\sum_{i=2}^nk_iP_{E_i}$ satisfies $\Psi(a)=0$. Using the above process repeatitly, we show that $k_i=0$ for $i=2,\cdots, n$. Hence $a=0$.

Next we prove surjectivity. It suffices to prove $1_B\in {\rm Im} \Psi$ for any compact open subset $B$ of $Y$. Observe that $B$ is a finite union of $\prod_{g\in G}Z_g$ with finitely many $g$ satisfying $Z_g\neq \{0, 1\}$. Using disjointification \cite[Remark 2.4]{cfst}, we only need to show that $1_{\prod_{g\in G}Z_g}\in {\rm Im} \Psi$ with finitely many $g$ satisfying $Z_g\neq \{0, 1\}$. Let $E'$ be the collection of all the elements $g\in G$ with $Z_g\neq \{0, 1\}$. If $Z_g=\{1\}$ for all $g\in E'$, then $1_{\prod_{g\in G}Z_g}=Q_{E'}\in {\rm Im} \Psi$. Otherwise we have a disjoint union $E'=E\cup F$ such that $E$ consists all $g\in E'$ with $Z_g=\{1\}$ and $F$ consists of all $g\in E'$ with $Z_g=\{0\}$. We claim that $1_{\prod_{g\in G}Z_g}=Q_E+\sum_{D\sub F, D\neq \emptyset}(-1)^{|D|}Q_{D\cup E}\in {\rm Im} \Psi$. To prove the claim, take $x=\{x_g\}_{g\in G}\in Y$. If $x\in \prod_{g\in G}Z_g$, we have $\big(Q_E+\sum_{D\sub F, D\neq \emptyset}(-1)^{|D|}Q_{D\cup E}\big)(x)=1$, since $Q_{D\cup E}(x)=0$ for each $D\sub F$ with $D\neq \emptyset$. If $x\notin \prod_{g\in G}Z_g$, we have two subcases. One subcase is that $x_g=1$ for all $g\in E$ (if E is not the empatyset) and there are $m\geq 1$ number $x_{h}=1$ for $h\in F$. It follows that 
\begin{equation}
\begin{split}
\big(Q_E+\sum_{D\sub F, D\neq \emptyset}(-1)^{|D|}Q_{D\cup E}\big)(x)
&=1+(-1){m \choose 1}+(-1)^2{m \choose 2}+\cdots+(-1)^{m}{m \choose m}=0.
\end{split}
\end{equation} Another subcase is that there exists $g\in E$ such that $x_g=0$. We have $\big(Q_E+\sum_{D\sub F, D\neq \emptyset}(-1)^{|D|}Q_{D\cup E}\big)(x)=0$. Observe that if $E=\emptyset$, the proof is true. Therefore, the proof for the claim is completed. 

We observe that for each $g\in G$ and $E$ a finite subset of $G$ with $g\in E$, we have $Q_E\in C_R(Y_g)$. Conversely, for $1_B\in C_R(Y_g)$, the preimage of $1_B$ under $\Psi$ belongs to $D_g$. Thus $\Psi$ restricts $D_g\cong C_R(Y_g)$ for each $g\in G$.
 \end{proof}

Take $g\in G$. We have the following commutative diagram 
\begin{equation*}
\xymatrix{ D_{g^{-1}} \ar[d]^{\a_g} \ar[r]^(.4){\Psi} &C_R(Y_{g^{-1}}) \ar[d]^{\phi_g}\\
D_g \ar[r]^(.4){\Psi}& C_R(Y_g).
}
\end{equation*} Indeed, for a finite subset $E$ of $G$ with $\v, g^{-1}\in E$, we have $$(\phi_g\circ \Psi)(P_E)(x)=(Q_E\circ\phi_{g^{-1}})(x)=Q_E(\{x_{gh}\}_{h\in G})=\prod_{h\in G}x_{gh}=Q_{gE}(x)=(\Psi\circ \a_g)(P_E)(x).$$ Thus we have $$P(G)=A_\v\rtimes_{\a}G\cong C_R(Y_{\v})\rtimes_{\phi}G.$$

It would be interesting if one can characterise the graded ideals of these algebras. 

\section{Applications to Leavitt path algebras}\label{section4}

In this section, we consider Leavitt path algebras over arbitrary graphs $E=(E^{0}, E^{1}, r, s)$ (see~\cite{abrams} for notations and construction). We apply Proposition \ref{propss} to obtain Gon\c{c}alves-Royer's Theorem which realises Leavitt path algebras as partial skew group rings (see  \cite[Theorem 3.3]{goncalvesroyer}). This allows us to describe a graph condition under which the associated Leavitt path algebra can be realised as a $\Z$-graded partial skew group ring. 

Recall that a group $G$ is called an \emph{ordered group} if its elements can be given a total ordering $\leq$ which is left and right invariant, meaning that $g \leq h$ implies $fg \leq fh$ and $gf\leq hf$ for all $f,g,h\in G$. We refer to the pair $(G, \leq)$ as the ordered group. As in previous sections $\varepsilon$ denotes the identity element of a group $G$. We call an element $c$ of an ordered group positive if $\varepsilon \leq c$ and $c\neq \varepsilon$. The set of positive elements in an ordered group $G$ is denoted by $G_+$. Recall that a function $w:E^1\rightarrow G$, induces a $G$-grading on $L_R(E)$, by defining $w(v)=\varepsilon$, $v\in E^0$,  and $w(e^*)=w(e)^{-1}$, $e\in E^1$. Here $L_R(E)$ is a Leavitt path algebra associated to the graph $E$ with coefficients in the commutative ring $R$.

\begin{lemma}\label{uniqueness} (Generalised graded uniqueness theorem) Let $E$ be a graph, $G$ an ordered group, $w: E^1\xra G_+$ and $R$ a commutative ring with identity. Suppose that $\pi: L_R(E)\xra A$ is a homomorphism of $G$-graded $R$-algebras such that $\pi(rv)\neq 0$ for all $v\in E^0$, and $r\in R$. Then $\pi$ is injective. 
\end{lemma}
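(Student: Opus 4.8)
The plan is to prove the generalised graded uniqueness theorem by reducing it to the standard graded uniqueness theorem for Leavitt path algebras graded over $\Z$ (or by a free group), exploiting the fact that $G$ is an ordered group with $w(E^1)\subseteq G_+$. The key structural observation is that for an ordered group, the positivity of the weights on edges forces a strong control on the degrees of homogeneous elements: any monomial $\mu\nu^*$ (with $\mu,\nu$ paths) has degree $w(\mu)w(\nu)^{-1}$, and the ordering lets us detect when $\mu$ and $\nu$ have equal length or are forced to be trivial. First I would recall that every homogeneous element of $L_R(E)$ can be written as an $R$-linear combination of monomials $\mu\nu^*$ of a fixed degree $g\in G$, and I would isolate the $\varepsilon$-homogeneous component, which is spanned by elements $\mu\nu^*$ with $w(\mu)=w(\nu)$.

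The core of the argument is to show that $\pi$ is injective on the $\varepsilon$-component $L_R(E)_\varepsilon$, and then bootstrap to all of $L_R(E)$ using that $\pi$ is graded. To handle the $\varepsilon$-component, I would use the hypothesis $\pi(rv)\neq 0$ together with a \emph{contraction}/reduction argument: take a nonzero homogeneous $x\in\Ker\pi$ of degree $g$, and by multiplying on the left and right by suitable vertices and ghost/real edges, reduce $x$ to an element supported at a single vertex $v$, i.e.\ push $x$ into $vL_R(E)_\varepsilon v$. The ordered-group hypothesis is exactly what guarantees this reduction terminates and does not introduce cancellation across different degrees: because $w(e)\in G_+$ for every edge, a product $\mu\nu^*$ lies in degree $\varepsilon$ only if $w(\mu)=w(\nu)$, and the total order prevents a positive combination of edge-weights from collapsing unexpectedly, so the usual reduction to a ``diagonal'' element $rv$ goes through. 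One then invokes $\pi(rv)\neq 0$ to force $x=0$.

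For the bootstrap from the $\varepsilon$-component to the whole algebra, I would argue as follows. Suppose $x\in\Ker\pi$ is homogeneous of arbitrary degree $g$; then $x^*x$ (or a suitable product $y^*x$ with $y$ chosen from the support of $x$) lies in $\Ker\pi$ and has degree $\varepsilon$, and by the $\varepsilon$-case it must vanish, from which one deduces $x=0$ by a standard positivity/faithfulness argument on the diagonal. Concretely, if $x=\sum r_i\mu_i\nu_i^*$ is homogeneous of degree $g$ with the $\mu_i\nu_i^*$ distinct, then by right-multiplying by appropriate $\nu_j$ and using the Cuntz--Krieger relations one extracts a nonzero diagonal element in $\Ker\pi$, contradicting the $\varepsilon$-case unless all $r_i=0$. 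Since a graded homomorphism has a graded kernel, injectivity on each homogeneous component yields global injectivity.

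The main obstacle I anticipate is the reduction step in the $\varepsilon$-component: showing that an arbitrary nonzero homogeneous element of $\Ker\pi$ can be transformed, via left/right multiplication by monomials, into a nonzero scalar multiple of a single vertex $rv$ without the multiplications either killing the element prematurely or mixing incomparable degrees. This is precisely where the ordered-group structure and the condition $w(E^1)\subseteq G_+$ must be used carefully: one needs that among the monomials $\mu_i\nu_i^*$ appearing in $x$, there is a ``minimal'' or ``maximal'' one with respect to path length (detectable through the total order on $G$ via the weights) that survives the reduction. Making this combinatorial selection rigorous, and confirming that the $\Z$-graded (or length-graded) uniqueness theorem of the existing literature can be applied after this reduction, is the technical heart of the proof; the remaining steps are formal consequences of gradedness.
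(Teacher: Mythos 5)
Your overall strategy (reduce to vertices, exploit gradedness of $\Ker\pi$, use the order on $G$ to separate degrees) points in the right direction, but there are two genuine gaps. First, the bootstrap from the $\varepsilon$-component to arbitrary degrees via $x^*x$ fails over a general commutative ring $R$: the involution on $L_R(E)$ is $R$-linear, there is no positivity available, and $x^*x=0$ does not imply $x=0$. Concretely, take $R=\Z/4\Z$ and $x=2e$ for an edge $e$; then $x\neq 0$ but $x^*x=4e^*e=0$. This is exactly why the hypothesis is stated as $\pi(rv)\neq 0$ for \emph{all} nonzero $r\in R$: the ring may have zero-divisors and nilpotents, so no ``faithfulness on the diagonal'' argument is available. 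Your fallback (right-multiplying by $\nu_j$ and extracting a nonzero diagonal element) is not a formal consequence of the Cuntz--Krieger relations either: distinct monomials $\mu\nu^*$ are not linearly independent in $L_R(E)$ (the relation $v=\sum_{e\in s^{-1}(v)}ee^*$ creates collisions), so proving that the extracted element is nonzero is precisely the content of the reduction theorem you are implicitly assuming. Once that theorem is in hand, the two-step split ($\varepsilon$-component plus bootstrap) is unnecessary.

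Second, and more importantly, your reduction step ignores the second alternative of the reduction theorem for Leavitt path algebras of arbitrary graphs (\cite[Proposition 3.1]{pbgm}, which is what the paper invokes): for a nonzero $x$ one can find $\alpha,\beta$ with either $\alpha x\beta\in Rv\setminus\{0\}$, \emph{or} $\alpha x\beta$ a nonzero element $\sum_{i=-m}^{n}r_ic^i$ of $wL_R(E)w$ where $c$ is a cycle without exits; the second case cannot be removed (for $E$ a single loop, $L_R(E)\cong R[t,t^{-1}]$ and $1-t$ can never be compressed to a nonzero multiple of a vertex). Handling this case is where the ordered-group hypothesis actually does its work in the paper's proof: since $w(c)\in G_+$ and an ordered group is torsion-free, the degrees $w(c^i)$ are pairwise distinct; as $\Ker\pi$ is a graded ideal, each homogeneous piece $r_ic^i$ lies in $\Ker\pi$, hence $r_iw=r_ic^i(c^*)^i\in\Ker\pi$, contradicting $\pi(r_iw)\neq 0$. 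Your proposal uses the order only to say that degree-$\varepsilon$ monomials satisfy $w(\mu)=w(\nu)$ (which is a tautology and needs no positivity), and so misses the one place where positivity and the total order are indispensable.
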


\begin{proof}
Suppose that $x\in L_R(E)$ is a nonzero element satisfying $\pi(x)=0$. Using a similar proof as \cite[Proposition 3.1]{pbgm}, one can show that there exist $\a, \b\in L_R(E)$ such that $\a x\b\in Rv$ for some $v\in E^0$, or there exist a vertex $w\in  E^0$ and a cycle without exits $c$ based at $w$ such
that $\a x\b$ is a nonzero element in $wL_R(E)w$. If $\a x\b\in Rv$, then we have $\pi(\a x\b)=0$.  
This is a contradiction as $\pi(rv)\not= 0$. If $\a x\b=\sum_{i=-m}^nr_i c^i$ for $m,n \in \mathbb{N}$ and $r_i\in R$, then $w(c^i)\neq w(c^j)$ for $-m\leq i, j \leq n$ and $i\neq j$. Otherwise, $w(c^{|i-j|})=w(c)^{|i-j|}=\varepsilon$ which is a contradiction, because $w(c)\in G_{+}$ and an ordered group has no elements of finite order except the identity. We have $r_ic^i\in \Ker \pi$ for each $-m\leq i\leq n$, since $\a x \b\in \Ker \pi$ and $\Ker \pi$ is $G$-graded. Then we have $r_ic^ic^{-i}=r_iw\in \Ker \pi$. This is a contradiction again. Hence, $\pi$ is injective. 
\end{proof}

A Leavitt path algebra associated to an arbitrary graph can be realised as the Steinberg
algebra associated to a groupoid of boundary path space of the graph $E$ (cf.~\cite{cs}).  Suppose that $G$ is an ordered group and $w: E^1\xra G$ a function satisfying $w(e)\in G_+$ for each edge $e\in E^1$. Then Leavitt path algebra $L_R(E)$ is a $G$-graded algebra. We extend $w$ to $E^*$ by defining $w(v)=0$ and $w(\a_1\cdots\a_n)=w(\a_1)\cdots w(\a_n)$. We construct a slightly different groupoid associated to the graph $E$ in order to realise Leavitt path algebras as $G$-graded Steinberg algebras.

For a directed graph $E$, we denote by $E^{\infty}$ the set of
infinite paths in $E$ and by $E^{*}$ the set of finite paths in $E$.  Set
\begin{equation}
\label{space}
X := E^{\infty}\cup  \{\mu\in E^{*}  \mid   r(\mu) \text{ is not a regular vertex}\}.
\end{equation}
and  \begin{equation}
\label{defgroupoid}
\mg_{E} := \big \{(\a x,w(\a)w(\b)^{-1}, \b x) \mid   \a, \b\in E^{*}, x\in X, r(\a)=r(\b)=s(x)\big \}.
\end{equation} We view each $(x, k, y) \in \mg_{E}$ as a morphism with range $x$ and source $y$ and the 
formulas $(x,g,y)(y,h,z)= (x, gh,z)$ and $(x,g,y)^{-1}= (y,g^{-1},x)$ define composition
and inverse maps on $\mg_{E}$ making it a groupoid with ${\mg_{E}}^{(0)}=\{(x, \varepsilon, x) \mid
x\in X\}$ which we identify with the set $X$. Here, $\varepsilon$ is the identity of $G$. 

Next, we describe a topology on $\mg_{E}$. For $\mu\in E^{*}$ define
\[
Z(\mu)= \{\mu x \mid x \in X, r(\mu)=s(x)\}\subseteq X.
\]
For $\mu\in E^{*}$ and a finite $F\subseteq s^{-1}(r(\mu))$, define
\[
Z(\mu\setminus F) = Z(\mu) \setminus \bigcup_{\a\in F} Z(\mu \a).
\] $X={\mg_{E}}^{(0)}$ is a locally compact Hausdorff space with the topology given by the basis $$\{Z(\mu \setminus F): \mu \in E^*, F \text{~is a finite subset of~} r(\mu)E^1\},$$ and each such $Z(\mu \setminus F)$ is compact and open (see \cite[Theorem 2.1]{we} and \cite[Theorem 2.2]{we}).

For $\mu,\nu\in E^{*}$ with $r(\mu)=r(\nu)$, and for a finite $F\subseteq s^{-1}(r(\mu))$, we define
\[
Z(\mu, \nu)=\{(\mu x, w(\mu)w(\nu)^{-1}, \nu x) \mid  x\in X, r(\mu)=s(x)\},
\]
and then
\[
Z((\mu, \nu)\setminus F) = Z(\mu, \nu) \setminus \bigcup_{\a\in F}Z(\mu\a, \nu\a).
\]
The sets $Z((\mu, \nu)\setminus F)$ constitute a basis of compact open bisections for a
topology under which $\mg_{E}$ is a Hausdorff ample groupoid (refer to \cite[\S2.3]{bcw}).

We have a continuous $1$-cocycle $\widetilde{w}:\mg_E\xra G$ such that $\widetilde{w}(x,g, y)=g$; compare with \cite[Lemma 2.3]{kp}. Thus the Steinberg algebra $A_R(\mg_E)$ is a $G$-graded
algebra (see~(\ref{hgboat})) with homogeneous components
\[
    A_{R}(\mg_E)_{g} = \{f\in A_{R}(\mg_E)\mid \supp(f)\subseteq \widetilde{w}^{-1}(g)\}.
\] Similarly as \cite[Example 3.2]{cs}, we have a homomorphism $\pi_E: L_R(E)\xra A_R(\mg_E)$ of $G$-graded algebras such that $\pi_E(\mu\nu^{*}-\sum_{\a\in F}\mu\a\a^{*}\nu^{*})=1_{Z((\mu,\nu)\setminus F)}$. By Lemma \ref{uniqueness}, $\pi_E$ is an injective homomorphism of $G$-graded algebras. Again as in \cite[Example 3.2]{cs}, $\pi_E:L_R(E)\xra A_R(\mg_E)$ is an isomorphism of $G$-graded algebras.

In order to describe a Leavitt path algebra $L_R(E)$ as a partial skew group ring, we assign a free group grading to $L_R(E)$ as follows.  Let $\F$ be the free group generated by $E^1$. The map $w:E^1\rightarrow \F, e\mapsto e$, with $w(e^*)=e^{-1}$ induces an $\F$-grading on $L_R(E)$.  Recall the set $X$ from (\ref{space}) and consider the subsets 
(\cite[\S 2]{goncalvesroyer})
 $$X_{ab^{-1}}=\{x\in X\;|\; x=ax', x'\in X\}$$ and bijective maps $$\theta_{ab^{-1}}:X_{ba^{-1}}\longrightarrow X_{ab^{-1}}$$ for $ab^{-1}\in\F$ with $a, b\in \bigcup_{n=1}^{\infty}E^n$ and $r(a)=r(b)$. We then have that $\theta=(\theta_c, X_c, X)_{c\in\F}$ is a partial action of $\F$ on the set $X$ given in \eqref{space}. For a  commutative ring $R$ with identity, we have the induced partial action $\theta=(\theta_c, C_R(X_c), C_R(X))_{c\in \F}$ of $\F$ on the algebra $C_R(X)$.


Now we apply Proposition \ref{propss} to obtain an $\F$-graded isomorphism between the Leavitt path algebras and the partial skew group ring, which implies that Leavitt path algebras are $\Z$-graded isomorphic to the partial skew group ring; compare with \cite[Theorem 3.3]{goncalvesroyer}.

\begin{cor}\label{corlpa}
Let $E$ be an arbitrary graph and $\F$ a free group generated by $E^1$. Then we have the isomorphism $L_R(E)\cong C_R(X)\rtimes_{\theta}\F$ of $\F$-graded algebras.
\end{cor}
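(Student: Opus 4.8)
The plan is to assemble the corollary from two ingredients already in place: the graded isomorphism $\pi_E: L_R(E)\cong_{\gr}A_R(\mg_E)$ established just above, and the chain of graded isomorphisms of Proposition \ref{propss} applied to the partial action $\theta$ of the (discrete) free group $\F$. The one genuinely new step is to identify the graph groupoid $\mg_E$ of \eqref{defgroupoid} with the transformation groupoid $\mg_X=\bigcup_{c\in\F}c\times X_c$ of \eqref{groupoid} attached to $\theta$, as $\F$-graded topological groupoids; once this is done the result follows by transport of structure along the Steinberg-algebra construction.

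First I would check that the hypotheses of Proposition \ref{propss} are met. The space $X$ of \eqref{space} is locally compact Hausdorff and totally disconnected, with the compact open cylinder sets $Z(\mu\setminus F)$ forming a basis; in particular each domain $X_{ab^{-1}}=Z(a)$ is compact open and therefore clopen, since compact subsets of a Hausdorff space are closed. Hence $\theta=(\theta_c,X_c,X)_{c\in\F}$ is a partial action of the discrete group $\F$ on a totally disconnected locally compact Hausdorff space with clopen domains, and the \emph{in particular} clause of Proposition \ref{propss} (taking $U=X$) yields $A_R(\mg_X)\cong_{\gr}C_R(X)\rtimes_{\theta}\F$.

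Next I would construct the groupoid isomorphism $\Phi:\mg_E\to\mg_X$ by sending $(\a x,\a\b^{-1},\b x)\mapsto(\a\b^{-1},\a x)$; note that $w(\a)w(\b)^{-1}=\a\b^{-1}$ in $\F$, so this simply reads off the first two coordinates of an element of $\mg_E\sub X\times\F\times X$. The points to verify are: the image lands in $\mg_X$, i.e. $\a x\in X_{\a\b^{-1}}$ (after reducing $\a\b^{-1}$ to $ab^{-1}$ by cancelling any common terminal edges of $\a,\b$, the range $\a x$ begins with $a$, so lies in $Z(a)=X_{ab^{-1}}$); $\Phi$ is a bijection (surjectivity: each $(c,z)\in\mg_X$ with $c=ab^{-1}$ reduced and $z=az'\in Z(a)$ comes from $(\a,\b,x)=(a,b,z')$, using $r(a)=r(b)$; injectivity: from $(\a\b^{-1},\a x)$ one recovers $\a$ as the maximal positive prefix, hence $x$ and then the source $\b x$); and $\Phi$ is a groupoid homomorphism, since $(\a x,\a\b^{-1},\b x)(\b x,\b\g^{-1},\g x)=(\a x,\a\g^{-1},\g x)$ corresponds to the composable product $(\a\b^{-1},\a x)(\b\g^{-1},\b x)=(\a\g^{-1},\a x)$ in $\mg_X$, with inverses matching likewise. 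Because $\Phi$ carries the basic bisection $Z((\mu,\nu)\setminus F)$ onto $\{\mu\nu^{-1}\}\times(\Phi\text{-image of the corresponding cylinder})$ and conversely, it is a homeomorphism; and since $\widetilde w(\a x,g,\b x)=g$ agrees with the canonical cocycle $(c,z)\mapsto c$ on $\mg_X$, the map $\Phi$ is grading-preserving.

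A topological groupoid isomorphism induces a graded isomorphism of Steinberg algebras $A_R(\mg_E)\cong_{\gr}A_R(\mg_X)$ via $f\mapsto f\circ\Phi^{-1}$. Composing, $L_R(E)\cong_{\gr}A_R(\mg_E)\cong_{\gr}A_R(\mg_X)\cong_{\gr}C_R(X)\rtimes_{\theta}\F$, which is the claim. I expect the main obstacle to be the reduced-word bookkeeping inside $\Phi$: showing that cancellation of common terminal edges of $\a$ and $\b$ is exactly compatible with the identifications $X_{ab^{-1}}=Z(a)$ and with the simultaneous reduction $(\a x,\a\b^{-1},\b x)=(\a' x',\a'\b'^{-1},\b' x')$ of groupoid elements, so that $\Phi$ is well defined and bijective, and in confirming that $\Phi$ matches the two bases of compact open bisections.
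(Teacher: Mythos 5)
Your proposal is correct and follows essentially the same route as the paper: both identify $\mg_E$ with the transformation groupoid $\bigcup_{c\in\F}c\times X_c$ via the map $(\a x,\a\b^{-1},\b x)\mapsto(\a\b^{-1},\a x)$, deduce a graded isomorphism of Steinberg algebras, and compose with $L_R(E)\cong_{\gr}A_R(\mg_E)$ and Proposition \ref{propss}. The only difference is that you spell out the verifications (clopenness of the domains $X_{ab^{-1}}=Z(a)$, the reduced-word bookkeeping, bijectivity, and compatibility of the bases of compact open bisections) which the paper dismisses as evident.
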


\begin{proof} We prove that the groupoids $\mg_E$ and $\mg=\bigcup_{c\in \F}c\times X_c$ are $\F$-graded isomorphism. Define a map 
\begin{align*}
\mg_E &\longrightarrow \bigcup_{c \in \F} c \times X_c\\
(px,pq^{-1},qx)&\longmapsto (pq^{-1},px),  
\end{align*} which is an isomorphism of groupoids. It is evident that this isomorphism of groupoids preserves the grading and topology. Thus there is an induced  isomorphism $A_R(\mg_E) \cong A_R(\mg) $ of $\F$-graded algebras. The fact that $ L_R(E) \cong A_R(\mg_E)$ as $\F$-graded algebras along with Proposition~\ref{propss} gives the following isomorphisms
\[L_R(E) \cong A_R(\mg_E) \cong A_R(\mg) \cong  C_R(X) \rtimes_\theta \F.\qedhere\]
\end{proof}

 For $v\in E^0$, $X_v=\{x\in X\;|\; s(x)=v\}$. Since $s(v)=v$, then $v\in X_v$ if and only if $v$ is not a regular vertex. There is a partial action $(\a_p, D_p, D(X))_{p\in \F}$ of $\F$ on algebra $D(X)={\rm span}\{\{1_p\;|\; p\in \F\setminus{0}\}\cup \{1_v\;|\; v\in E^0\}\}$, where $D_p={\rm span}\{1_p1_q\;|\; q\in \F\}$ for each $p\in \F\setminus \{0\}$ and $\a_c:D_{c^{-1}}\xra D_c$ is given by $\a_c(f)=f\circ \theta_{c^{-1}}$ \cite[\S 2]{goncalvesroyer}. Here, span means the $R$-linear span, $1_p$ the characteristic function of the set $X_p$ and $1_v$ the characteristic function of the set $X_v$.


We observe that the two partial actions $(\theta_c, C_R(X_c), C_R(X))_{c\in \F}$ and $(\a_c, D_c, D(X))_{c\in \F}$ coincide, 
or equivalently $C_R(X)=D(X)$, $C_R(X_c)=D_c$ and $\theta_c=\a_c$ for $c\in\F$ with $X_c\neq \emptyset$. We first show that $C_R(X)=D(X)$. Observe that $D(X)\sub C_R(X)$, since $1_p, 1_v\in C_R(X)$ for $p\in \F\setminus \{0\}$ and $v\in E^0$. To prove that $C_R(X)\sub D(X)$, we show that $1_B\in D(X)$ for any compact open subset $B$ of $X$. We write $B=\bigcup_{i=1}^nZ(\mu_i\setminus F_i)$. By the disjointification in \cite[Example 3.2]{cs}, $B$ is a disjoint union of sets of the form $Z(\mu\setminus F)$. We only need to show that $1_{Z(\mu\setminus F)}\in D(X)$ for $\mu\in E^*$ and $F$ a finite subset of $s^{-1}(r(\mu))$. We have 
\begin{equation}\begin{split}1_{Z(\mu\setminus F)}=1_{Z(\mu) \setminus \bigcup_{\a\in F} Z(\mu \a)}=1_{Z(\mu)}-1_{Z(\mu)\cap (\bigcup_{\a\in F}Z(\mu\a))}
&=1_{Z(\mu)}-1_{Z(\mu)}1_{\bigcup_{\a\in F}Z(\mu\a))}\in D(X),
\end{split}
\end{equation} since $1_{Z(\mu)}\in D(X)$ and by inclusion-exclusion principle $1_{\bigcup_{\a\in F}Z(\mu\a))}\in D(X)$. It is evident that $C_R(X_c)=D_c$ and $\a_c=\theta_c: D_{c^{-1}}\xra D_c$ for $p\in\F\setminus \{0\}$.

There is a group homomorphism $\psi: \F\xra \Z$ given by $\psi(c)=m-n$ for $c\in\F$, where $m$ is the number of generators (elements of $E^1$) of $c$, and $n$ is the number of inverses of generators of $c$. Then the partial skew group ring $C_R(X)\rtimes_{\theta}\F$ is a $\Z$-graded algebra. Recall that $L_R(E)$ is naturally $\Z$-graded by the length of paths in $E$. It is easy to see that the isomorphism in Corollary \ref{corlpa} is also an isomorphism of $\Z$-graded algebras. However, $\psi: \F\xra \Z$ may can not induce a partial action of $\Z$ on $X$. \begin{example}Let $E$ be the following graph. 

\begin{equation*}
\xymatrix@C=0.2cm@R=0.3cm{
               &      &   &u~\bullet\ar[drr]^{\alpha}    &     &\\
   & & &  &          &\bullet ~w\\
               &      &   &v ~\bullet\ar[urr]_{\beta}       &  &
             }
\end{equation*} Then the boundary path space is  $X=\{\a,\b,w\}$. There exist paths $\alpha$ and $\beta$ of length one terminating at the sink vertex $w$ which give rise to  $X_{\alpha\beta^{-1}}\neq X_{\beta\alpha^{-1}}$. Recall that there is a partial action $\phi=(\phi_c, X_c, X)_{c\in \F}$ of $\F$ on X, where $\F$ is the free group generated by $\{\a,\b\}$. Now by Lemma \ref{inducedpartial}, one can check that $\psi:\F\xra \Z$ can not induce a partial action of $\Z$ on $X$. 
\end{example}

A functor $F : \mathcal{C}\xra \mathcal{D}$ between two small categories $\mathcal{C}$ and $\mathcal{D}$ is called \emph{star injective} \cite[page 8]{batista}, if the map $$F |_{\star(x)} : \star(x) \longrightarrow \star(F(x))$$ is injective, where $\star(x) =\{f : x\xra y \text{~a morphism in~} \mathcal{C} \;|\; y \in \mathcal{C}\}$ for
every object $x \in\mathcal{C}$.

From now on, let $E$ be a graph and $w: E^1\xra \Z$ a function assigning each edge to $1$. Then by \eqref{defgroupoid} the groupoid $\mg_E$ is given by the standard 
$$\mg_E=\big \{(\a x,|\a|-|\b|, \b x) \mid   \a, \b\in E^{*}, x\in X, r(\a)=r(\b)=s(x)\big \}.$$ In this case, the 1-cocycle $\widetilde{w}:\mg_E\xra \Z$ is given by $\widetilde{w}(x, k, y)=k$ for $(x, k, y)\in \mg_E$.   

\begin{lemma}\label{iffstarinj}
Let $E$ be an arbitrary graph and $w: E^1\xra \Z$ a function assigning each edge to $1$. Then $\widetilde{w}:\mg_E\xra \Z$ is star injective if and only if $|r^{-1}_E(v)|\leq 1$ for each $v\in E^0$.
\end{lemma}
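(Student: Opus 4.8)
The plan is to first translate the categorical definition of star injectivity into a concrete combinatorial statement about $\mg_E$. Viewing $\widetilde w$ as a functor into $\Z$ (a one-object groupoid) and recalling that in $\mg_E$ the source of $(x,k,y)$ is $y$, the star $\star(u)$ of a unit $u\in X$ is exactly the set of morphisms $(x,k,u)$ with source $u$. Hence $\widetilde w$ is star injective if and only if for every $u\in X$ and every $k\in\Z$ there is at most one morphism in $\mg_E$ with source $u$ and degree $k$. Every such morphism has the form $(\a z,\,|\a|-|\b|,\,\b z)$, where $\b$ is a prefix of $u$ with $u=\b z$, and $\a\in E^*$ satisfies $r(\a)=r(\b)=s(z)$; its degree is $|\a|-|\b|$ and its range (target) is $\a z$. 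So star injectivity amounts to saying that the target $\a z$ is determined by $u$ together with $k=|\a|-|\b|$.

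The backward direction (the condition $|r^{-1}(v)|\le 1$ for all $v$ implies star injectivity) will rest on an elementary fact, proved by induction on length: \emph{if each vertex receives at most one edge, then for every vertex $v$ and every $n\ge 0$ there is at most one path $\mu$ with $|\mu|=n$ and $r(\mu)=v$}. Indeed, a path of length $n\ge 1$ ending at $v$ factors as $\mu=\nu e$ with $e$ the unique edge in $r^{-1}(v)$ and $\nu$ the unique (by induction) path of length $n-1$ ending at $s(e)$. Granting this, take two morphisms $(\a z,k,\b z)$ and $(\a' z',k,\b' z')$ in $\star(u)$ of the same degree, so $\b z=\b' z'=u$. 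Assuming $|\b|\le|\b'|$ and writing $\b'=\b\rho$, $z=\rho z'$, the equal-degree condition forces $|\a'|=|\a|+|\rho|$; since $\a\rho$ and $\a'$ are then two paths of the same length ending at $s(z')$, the fact gives $\a\rho=\a'$, whence the two targets $\a z=\a\rho z'$ and $\a' z'$ coincide and the morphisms are equal.

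For the forward direction I will argue by contraposition: if some vertex $v$ has two distinct incoming edges $e_1\ne e_2$, I exhibit a failure of injectivity. First choose a boundary path $u\in X$ with $s(u)=v$; such $u$ exists by greedily extending from $v$, stopping at the first non-regular vertex or else producing an infinite path in $E^\infty\sub X$. Then $(e_1u,1,u)$ and $(e_2u,1,u)$ are two morphisms of $\mg_E$ (taking $\b=v$, $z=u$), both lying in $\star(u)$ and both of degree $1$, yet their targets $e_1u\ne e_2u$ differ because $e_1\ne e_2$; hence $\widetilde w$ is not star injective.

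I expect the main obstacle to be the backward direction, specifically isolating and proving the unique-backward-path fact and then carefully bookkeeping the two prefixes $\b,\b'$ of $u$, reducing the comparison of targets to a comparison of two equal-length paths ending at the common vertex $s(z')$. The forward direction is routine once the existence of a boundary path with prescribed source is granted, which is a standard property of $X$.
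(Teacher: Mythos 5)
Your proposal is correct and takes essentially the same route as the paper's proof: both directions reduce to the fact that when $|r^{-1}_E(v)|\le 1$ for all $v$, a path in $E$ is determined by its range and its length --- you state and prove this as an explicit induction lemma and apply it once (to $\a\rho$ and $\a'$), while the paper uses it implicitly twice (first to get the suffix dichotomy $u_1=\d v_1$ or $v_1=fu_1$, then to conclude $\d=s$, resp.\ $f=t$), your prefix bookkeeping merely avoiding the paper's case split on the sign of $k$. The forward directions also agree up to cosmetics: the paper argues by contradiction, splits into sink/no-sink cases to produce a boundary path $p$ out of the offending vertex, and uses the two degree-zero morphisms $(\a p,0,\a p)$ and $(\b p,0,\a p)$ in $\star(\a p)$, whereas you argue by contraposition with a unified greedy construction of $u$ and use the two degree-one morphisms $(e_1u,1,u)$ and $(e_2u,1,u)$ in $\star(u)$.
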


\begin{proof}
$\Rightarrow$: Suppose that there exists $w\in E^0$ such that $|r^{-1}_E(w)|\geq 2$. Then there exist at least two distinct edges $\a$ and $\b$ such that $r(\a)=r(\b)=w$. We have two cases. The first case is that $w$ is connected to a sink $z\in E^0$. Then there exists a path $p$ with $s(p)=w$ and $r(p)=z$. Then $\a p, \b p\in X$ and $\star(\a p)$ contains at least two elements $(\a p, 0, \a p)$ and $(\b p, 0, \a p)$. Since $\widetilde{w}(\a p, 0, \a p)=0=\widetilde{w}(\b p, 0, \a p)$, $\widetilde{w}: \star(\a p)\xra \Z$ is not injective. A contradiction. The other case is that $w$ is not connected to a sink. We have an infinite path $q$ starting at $w$. So $\a q, \b q\in X$. We obtain that $\widetilde{w}:\star(\a q)\xra \Z$ is not injective, again a contradiction. 

$\Leftarrow$: Suppose that $|r^{-1}_E(v)|\leq 1$ for each $v\in E^0$. For any $x\in \mg_E^{(0)}$, we have $\star(x)=\{(y, k, x)\in \mg_E \}$. Suppose that $(y_1, k_1, x)$ and $(y_2, k_2, x)$ both belong to $\star(x)$. We need to show that if $k_1=k_2$, then $y_1=y_2$. We write $x=v_1x_1=v_2x_2, y_1=u_1x_1$ and $y_2=u_2x_2$ with $u_1,u_2, v_1,v_2$ paths in $E$. Since $r(u_1)=r(v_1)$, we have $u_1=\d v_1$ or $v_1=fu_1$ for $\d, f$ paths in $E$. Similarly, either $u_2=sv_2$ or $v_2=tu_2$ for $s, t$ paths in $E$. If $u_1=\d v_1$, then $u_2=sv_2$ because $|u_1|-|v_1|=k_1=k_2=|u_2|-|v_2|$. It follows that $|\d|=|s|$ and $r(\d)=s(v_1)=s(x)=s(v_2)=r(s)$. Thus $\d=s$ and  $$y_1=u_1x_1=\d v_1x_1=\d x=sx=sv_2x_2=u_2x_2=y_2.$$ If $v_1=fu_1$, then we have $v_2=tu_2$. We have $|f|=|v_1|-|u_1|=|v_2|-|u_2|=|t|$ and $fu_1x_1=v_1x_1=x=v_2x_2
=tu_2x_2$. Thus $f=t$ and $y_1=u_1x_1=u_2x_2=y_2$.
\end{proof}

Recall that given a partial action $\phi=(\phi_g, X_g, X)_{g\in G}$ of a group $G$ on a set $X$, we have the associated groupoid $\mg_X=\bigcup_{g\in G}g\times X_g$. The set of objects of $\mg_X^{(0)}$ is identified with $X$. For $x\in \mg_X^{(0)}$, we have $\star(x)=\{(g, \phi_g(x))\;|\; g\in G\}$. Observe  that the projection $\pi: \mg_X \xra G$, given by $\pi(g, x) = g$ is a functor between the category $\mg_X$ to the category $G$ which is viewed as a small category with one object. The functor $\pi:\mg_X\xra G$ is star injective. Conversely, recall from \cite[page 8]{batista} that given a groupoid $\mg$ with a star injective functor $F:\mg\xra G$ (viewing $G$ as a small category with one object), we can associate a partial action of $G$ on $\mg^{(0)}$. Indeed, set $X=\mg^{(0)}$ and for each $g \in G$ set
$$X_g=\{x\in \mg^{(0)}\; |\; \exists ~\g\in\mg, r(\g)=x, F(\g)=g\}.$$ Define the maps $\phi_{g^{-1}} : X_{g}\xra X_{g^{-1}}$ as
$\phi_{g^{-1}}(x) = d(\g)$ such that $r(\g) = x, F(\g) = g$. This map is well defined because the functor $F$ is star injective. Then we have a partial action $\phi=(\phi_g, X_g, X)_{g\in G}$ of $G$ on $\mg^{(0)}$. The given groupoid $\mg$ with the star injective functor $F:\mg\xra G$ is isomorphic to the associated groupoid $\mg_X=\bigcup_{g\in G}g\times X_g$ with $X=\mg^{(0)}$. Define \begin{equation}\label{isogroupoid}
\eta:\mg\longrightarrow \mg_X
\end{equation} by $\eta(\g)=(F(\g), r(\g))$ for $\g\in\mg$. We observe that $\eta$ preserves the  composition and the  inverse. It is evident that $\eta$ is a bijection and preserves the grading of groupoids. Thus $\eta$ is an isomorphism of $G$-graded groupoids. By Corollary \ref{corvon}, in this case $A_K(\mg)\cong_{\gr}A_K(\mg_X)$ is $G$-graded von Neumann regular.

Specialising this to the groupoid $\mg_E$ of a graph $E$ with a star injective functor $\widetilde{w}:\mg_E\xra \Z$, we have a partial action $\phi$ of $\Z$ on $\mg_E^{(0)}$ and realise the Leavitt path algebra $L_R(E)$ as the partial skew group ring $C_R(X)\rtimes_{\phi}\Z$.

\begin{cor}\label{kkkkkk} Let $E$ be a graph which satisfies $|r^{-1}_E(v)|\leq 1$ for each $v\in E^0$ and $w: E^1\xra \Z$ a function assigning each edge to $1$. Then the Leavitt path algebra $L_R(E)$ is $\Z$-graded isomorphic to the partial skew group ring $C_R(X)\rtimes_{\phi}\Z$.
\end{cor}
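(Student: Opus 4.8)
The plan is to chain together the results established above into a string of $\Z$-graded isomorphisms. First I would observe that the hypothesis $|r^{-1}_E(v)|\leq 1$ for every $v\in E^0$ is exactly the criterion appearing in Lemma \ref{iffstarinj}, so the continuous cocycle $\widetilde{w}:\mg_E\xra\Z$, $\widetilde{w}(x,k,y)=k$, is star injective. This is the only place where the graph condition enters; everything that follows is formal.

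Since $\widetilde{w}$ is star injective, the general correspondence recorded just before \eqref{isogroupoid} supplies a partial action $\phi=(\phi_g, X_g, X)_{g\in\Z}$ of $\Z$ on $X=\mg_E^{(0)}$, with $X_g=\{x\in X\mid \exists\,\g\in\mg_E,\ r(\g)=x,\ \widetilde{w}(\g)=g\}$ and $\phi_{g^{-1}}(x)=d(\g)$, and moreover the map $\eta$ of \eqref{isogroupoid}, $\eta(\g)=(\widetilde{w}(\g), r(\g))$, is an isomorphism of $\Z$-graded groupoids from $\mg_E$ onto $\mg_X=\bigcup_{g\in\Z}g\times X_g$. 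Because $\eta$ is a homeomorphism carrying $\widetilde{w}$ to the canonical cocycle $(g,x)\mapsto g$ on $\mg_X$, it induces a $\Z$-graded isomorphism $A_R(\mg_E)\cong_{\gr}A_R(\mg_X)$ of the associated Steinberg algebras.

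Next I would apply Proposition \ref{propss} with $U=X$ to obtain $A_R(\mg_X)\cong_{\gr}C_R(X)\rtimes_{\phi}\Z$. Combining this with the earlier identification $L_R(E)\cong_{\gr}A_R(\mg_E)$ furnished by $\pi_E$ together with Lemma \ref{uniqueness}, I would arrive at
\[
L_R(E)\cong_{\gr}A_R(\mg_E)\cong_{\gr}A_R(\mg_X)\cong_{\gr}C_R(X)\rtimes_{\phi}\Z,
\]
all maps being isomorphisms of $\Z$-graded $R$-algebras, which is the assertion. The main point requiring care, and the only genuine obstacle, is checking that the hypotheses of Proposition \ref{propss} hold for $X=\mg_E^{(0)}$: the boundary path space is locally compact Hausdorff with a basis of compact open sets (by \cite{we}), hence totally disconnected since compact open subsets of a Hausdorff space are clopen, and one must verify that each $X_g=r(\widetilde{w}^{-1}(g))$ is not merely open (immediate from $\mg_E$ being \'etale) but clopen, so that $\phi$ is a partial action by clopen sets as the proposition demands. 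Once this is secured, the grading bookkeeping is automatic, since the length grading of $L_R(E)$, the $\widetilde{w}$-grading of $A_R(\mg_E)$, and the canonical grading of $C_R(X)\rtimes_{\phi}\Z$ all correspond under the maps above.
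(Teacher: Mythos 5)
Your proposal follows exactly the route of the paper's own proof: Lemma \ref{iffstarinj} gives star injectivity of $\widetilde{w}$, the correspondence culminating in \eqref{isogroupoid} converts the star-injective cocycle into a partial action $\phi$ of $\Z$ on $X=\mg_E^{(0)}$ together with a graded isomorphism $\mg_E\cong\mg_X$, and Proposition \ref{propss}, combined with $L_R(E)\cong_{\gr}A_R(\mg_E)$, finishes the chain. The paper's proof is precisely this two-step citation, so in approach you and the paper coincide; where you go beyond the paper is in flagging that the hypotheses of Proposition \ref{propss} must be checked, and you correctly isolate the critical one, namely that each $X_g$ must be \emph{clopen} in $X$.

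However, you leave that verification as something to be ``secured,'' and in fact it cannot be secured at the stated level of generality; this is a genuine gap, which your argument shares with (and inherits from) the paper. Consider the graph with vertices $v$, $w_i$, $u_i$ ($i\in\mathbb{N}$) and edges $e_i\colon v\to w_i$, $f_i\colon w_i\to u_i$. Every vertex receives at most one edge, so the hypothesis of the corollary holds, but $v$ is an infinite emitter and hence a boundary path of length $0$. Here $X_1=\{x\in X\;|\;|x|\geq 1\}=r(\widetilde{w}^{-1}(1))$ is open but not closed: every basic neighbourhood $Z(v\setminus F)$ of $v$, with $F\sub s^{-1}(v)$ finite, contains all but finitely many of the paths $e_if_i\in X_1$, so $v$ lies in the closure of $X_1$ without belonging to it. Thus Proposition \ref{propss}, as stated, does not apply, and the sentence ``once this is secured'' hides an unprovable step. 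There are two ways to close the gap: either restrict the corollary to row-finite graphs --- then the complement of $X_n$ consists of paths $\mu$ of length $<n$ ending at sinks, each an isolated point since $Z(\mu)=\{\mu\}$, while $X_{-n}$ is the disjoint union of the clopen sets $Z(u)$ over vertices $u$ receiving a path of length $n$, so all $X_g$ are indeed clopen --- or prove a variant of Proposition \ref{propss} assuming only that the $X_g$ are open. The latter is plausible, since $\mg_X$ is still an ample Hausdorff groupoid when the $X_g$ are merely open, so Theorem \ref{thmpsisr} still applies; but the map $\Psi$ in the proof of Proposition \ref{propss} must then be reworked, because $g\times U_g$ need no longer be a compact open bisection and the symbol $\d_{g\times U_g}$ is no longer available.
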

\begin{proof}
By Lemma \ref{iffstarinj}, $\widetilde{w}:\mg_E\xra \Z$ is star injective. Then the consequence follows from \eqref{isogroupoid} and Proposition \ref{propss}.
\end{proof}

As an example, the Corollary~\ref{kkkkkk} shows that the Toepliz algebra can be written as a partial skew integral group ring.



\section{Acknowledgements}
The authors would like to acknowledge Australian Research Council grants DP150101598 and
DP160101481.


\begin{thebibliography}{9999}


\bibitem{abrams} G. Abrams, {\it Leavitt path algebras: the first decade}, Bull. Math. Sci. 5 (2015), no. 1, 59--120.

\bibitem{abadie1}F. Abadie, {\it Enveloping actions and Takai duality for partial actions}, J. Funct. Anal. $\mathbf{197}$  (2003), 14--67.

\bibitem{abadie}F. Abadie, {\it On partial actions and groupoids}, Proc. Amer. Math. Society 
$\mathbf{132}$ (4) (2003), 1037--1047.


\bibitem{pbgm} 
G. Aranda Pino, D. Barquero, C. Gonz$\acute{\rm a}$lez , M. Molina, {\it Socle theory for Leavitt path algebras of arbitrary graphs}, Rev. Mat. Iberoamericana $\mathbf{26}$ (2010), no. 2, 611--638.


\bibitem{batista} E. Batista, {\it Partial actions: what they are and why we care}, arXiv:1604.06393v1.

\bibitem{bcw} N. Brownlowe, T.M. Carlsen,M.F. Whittaker, {\it Graph algebras and orbit equivalence}, Ergodic Theory Dynam. Systems $\mathbf{37}$ (2017), no. 2, 389--417. 

\bibitem{beutergoncalves}V.M. Beuter and D. Gon\c{c}alves, {\it The interplay between Steinberg algebras and partial skew rings}, arXiv:1706.00127v1.


\bibitem{be} G. Boava, R. Exel, {\it Partial crossed product description of the C*-algebras associated with integral domains}, Proc. Amer. Math. Soc. $\mathbf{141}$ (2013) 2439--2451.


\bibitem{bcfs} J. Brown, L.O. Clark, C. Farthing, A. Sims, {\it Simplicity of algebras
    associated to \'etale groupoids}, Semigroup Forum $\mathbf{88}$ (2014), no. 2, 433--452.

\bibitem{bussexel} A. Buss, R. Exel, {\it Inverse semigroup expansions and their actions on $C^*$-algebras}, Illinois J. Math., $\mathbf{56}$ (2014), 1185--1212.

\bibitem{clarkmichell} L.O. Clark, C. Edie-Michell, {\it Uniqueness theorems for Steinberg
    algebras}, Algebr. Represent. Theory $\mathbf{18}$ (2015), no. 4, 907--916.



\bibitem{cep} L.O. Clark, R. Exel, E. Pardo, {\it A generalised uniqueness theorem and
    the graded ideal structure of Steinberg algebras}, arXiv:1609.02873v1.

\bibitem{cfst} L.O. Clark, C. Farthing, A. Sims, M. Tomforde, {\it A groupoid
    generalisation of Leavitt path algebras}, Semigroup Forum $\mathbf{89}$ (2014),
    501--517.


\bibitem{cp} L.O. Clark, Y.E.P. Pangalela, {\it Kumjian--Pask algebras of
    finitely-alighed higher-rank graphs}, arXiv:1512.06547v1.


\bibitem{cs} L.O. Clark, A. Sims, {\it Equivalent groupoids have Morita equivalent
    Steinberg algebras}, J. Pure Appl. Algebra $\mathbf{219}$ (2015), 2062--2075.


\bibitem{de} M. Dokuchaev, R. Exel, {\it Associativity of crossed products by partial actions, enveloping actions and partial representations}, Trans. Amer. Math. Soc. $\mathbf{357}$ (2005), no. 5, 1931--1952.



\bibitem{exel1994} R. Exel, {\it Circle actions on $C^*$-algebras, partial automorphisms and a generalized Pimsner-Voiculescu exact sequence}, J. Funct. Anal. $\mathbf{122}$ (1994), 361--401.


\bibitem{exel1997} R. Exel, {\it Twisted partial actions: A classification of regular $C^*$-algebraic bundles}, Proc. London Math. Soc. $\mathbf{74}$ (3) (1997) 417--443.


\bibitem{exel1998} R. Exel, {\it Partial Actions of Groups and Actions of Semigroups}, Proc. Am. Math. Soc. $\mathbf{126}$ (12) (1998) 3481--3494.



\bibitem{exel} R. Exel, {\it Reconstructing a totally disconnected groupoid from its ample semigroup}, Proc. Amer. Math. Soc. $\mathbf{138}$, 2991--3001 (2010)

\bibitem{el} R. Exel, M. Laca, {\it Cuntz-Krieger algebras for infinite matrices}, J. Reine Angew. Math. $\mathbf{512}$ (1999) 119--172.

\bibitem{ferrero} M. Ferrero, {\it Partial actions of groups on algebras, a survey}, S$\tilde{\rm a}$o Paulo J. Math. Sci. $\mathbf{3}$ (2009), no. 1, 95--107.


\bibitem{exelvieira} R. Exel, F. Vieira, {\it Actions of inverse semigroups arising from partial actions of groups}, J. Math. Anal. Appl. $\mathbf{363}$ (2010), 86--96.

\bibitem{goncalvesroyer} D. Gon\c{c}alves, D. Royer, Leavitt path algebras as partial skew group rings, Comm. in Algebra $\mathbf{42}$ (2014), 3578--3592.


\bibitem{kp} A. Kumjian, D. Pask, {\it $C^{*}$-algebras of directed graphs and group
    actions}, Ergod. Th. Dynam. Systems $\mathbf{19}$ (1999), 1503--1519.

\bibitem{maclanahan} K. McClanahan, {\it K-theory for partial crossed products by discrete groups}, J. Funct. Anal. $\mathbf{130}$ (1995) 77--117. 

\bibitem{paterson} A.L.T. Paterson, Groupoids, inverse semigroups, and their operator algebras, Birkhäuser, 1999.



\bibitem{quiggraeburn} J. C. Quigg, I.  Raeburn, {\it Characterizations of crossed products by partial actions}, J. Operator Theory $\mathbf{37}$ (1997) 311--340.


\bibitem{re} J. Renault, A groupoid approach to $C^{*}$-algebras, Lecture Notes in Mathematics, 793. Springer, Berlin, 1980.


\bibitem{sieben} N. Sieben, {\it $C^*$-crossed products by partial actions and actions of inverse semigroups}, J. Austral. Math. Soc. Ser. A $\mathbf{63}$ (1997), 32--46.

\bibitem{st} B. Steinberg, {\it A groupoid approach to discrete inverse semigroup
    algebras}, Adv. Math. $\mathbf{223}$ (2010), 689--727.


\bibitem{we} S.B.G. Webster, {\it The path space of a directed graph}, Proc. Amer. Math.
    Soc. $\mathbf{142}$  (2014), 213--225.



\end{thebibliography}
\end{document}